\newtheorem{lemma}{Lemma}[section]
\newtheorem{proposition}[lemma]{Proposition}
\newtheorem{theorem}[lemma]{Theorem}
\newtheorem{defn}[lemma]{Definition}
\newtheorem{conjecture}[lemma]{Conjecture}
\newtheorem{corollary}[lemma]{Corollary}
\newtheorem{pbm}[lemma]{Problem}
\newtheorem*{special theorem}{My Specially-Named Theorem}
\begin{document}


\pagestyle{plain}


\title{ A note on $d$-maximal $p$-groups I }
\author{Messab Aiech}
\address{Department of Mathematics\\
	ENSET-Skikda\\
	Skikda, Algeria.}
\email{aiech21messab@gmail.com}
\author{Hanifa Zekraoui}
\address{Department of Mathematics\\
	University Larbi Ben M’hidi \\ 
Oum El Bouaghi, Algeria.}
\email{Hanifazekraoui@yahoo.fr}
\author{Yassine Guerboussa}
\address{Department of Mathematics\\
	University Kasdi Merbah-Ouargla\\
	Ouargla, Algeria.}
\email{yassine\_guer@@hotmail.fr}

\maketitle

\begin{abstract}
A finite $p$-group $G$ is said to be $d$-maximal if $d(H)<d(G)$ for every  subgroup $H<G$, where $d(G)$ denotes the minimal number of generators of $G$. A similar definition can be formulated when  $G$  is acted on by some group $A$.  We generalize results of  B. Kahn and T. Laffey  to the latter case, and  give them in particular alternative short proofs.  We answer moreover a question of Y. Berkovich about the minimal non-metacyclic $p$-groups.

\end{abstract}\vspace{1cm}



\section{Introduction}	
Throughout, $p$ denotes a prime number,  $G$ a   $p$-group, and $A$ a  group acting on $G$ (unless otherwise stated, all the groups considered here are finite).   

A subgroup of $G$ which is $A$-invariant will also be termed  an $A$-subgroup. The minimal number of generators of $G$ will be denoted by $d(G)$, so $d(G)=\dim_{\mathbb{F}_p} G/\Phi(G)$, where $\Phi(G)=G^{p} \gamma_{2}(G) $ is the Frattini subgroup of $G$.    We shall write $\Omega_n(G)$ for the subgroup $\langle x\in G \mid  x^{p^{n}}=1\rangle$, and $G^{p^{n}}$ for the subgroup  $\langle x^{p^{n}}  \mid  x\in G \rangle$.  We write  $P_n(G)$ for the $n$-th term of the lower $p$-central series of $G$; recall that $P_1(G)=G$, and $P_{n+1}(G)=[P_n(G),G] P_n(G)^{p}$, for $n\geq 1$.  Any other unexplained notation is standard. 

\begin{defn}
We say that  $G$  is \emph{$d$-maximal} for $A$-subgroups if   $d(H)< d(G)$ for every proper  $A$-subgroup $H$ of $G$. 
\end{defn}
 If the action of $A$  on $G$ is trivial, we cover the notion of $d$-maximal $p$-groups introduced by Kahn in  \cite[Appendice]{Kahn SW}.  These $p$-groups are interesting at least for the following reason: 

($\mathrm{F}$) \textit{There always exists  an $A$-subgroup $K\leq G$ such that $K$ is $d$-maximal for $A$-subgroups, and  $d(K)\geq d(G)$}. \footnote{This fact can be strengthened when $A$ is  a  $p$-group (see Lemma \ref{Second key}). }

Indeed, consider the  set of the $A$-subgroups $S\leq G$ such that $d(S)\geq d(G)$, and let $K$ be  any minimal element in this set  (with respect to the order defined by inclusion).  
If $H<K$ is $A$-invariant,  then   $d(H)<d(G)\leq d(K)$; the result follows.

In \cite{Kahn SW}, the interest  in $d$-maximal $p$-groups  arose from the study of the
Stiefel-Whitney classes $w_{i}(r_G) $ of the  real regular representation $r_G$ ($G$ is a $2$-group). More precisely,  Kahn  introduced  the following invariant:
$$\nu(G)=\min \{n>0\mid w_{2^{n-1}}(r_G)\neq 0\},$$
and conjectured the following.
\begin{conjecture}\label{Kahn conjecture} $\nu(G)\geq d(G)$.
\end{conjecture}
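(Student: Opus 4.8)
The plan is to prove the equivalent statement that $w_{2^{m-1}}(r_G)=0$ for every integer $m$ with $0<m<d(G)$, by induction on $m$, using essentially one identity: since $r_G=\mathrm{Ind}_1^G\mathbb R$, for every $H\le G$ one has $\mathrm{Res}^G_H r_G\cong [G:H]\cdot r_H$, and therefore, mod $2$, $w(\mathrm{Res}^G_H r_G)=w(r_H)^{[G:H]}=\sum_{i\ge 0}w_i(r_H)^{[G:H]}$, the last equality because $[G:H]$ is a power of $2$ and the Frobenius is additive on $H^\ast(BH;\mathbb F_2)$; in particular $w(\mathrm{Res}^G_H r_G)$ has no component in a positive degree that is not a multiple of $[G:H]$. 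For the base case $m=1$: for each maximal subgroup $M<G$ we get $w_1(r_G)\big|_M=w_1(\mathrm{Res}^G_M r_G)=0$ (degree $1$ is not a multiple of $[G:M]=2$); but $w_1(r_G)$ lies in $H^1(BG;\mathbb F_2)=\mathrm{Hom}(G,\mathbb F_2)$, and a nonzero homomorphism $G\to\mathbb F_2$ vanishes on exactly one maximal subgroup, whereas $d(G)\ge 2$ forces $G$ to possess at least three. Hence $w_1(r_G)=0$; in particular Conjecture~\ref{Kahn conjecture} holds when $d(G)=2$.

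For the inductive step, fix $m$ with $1<m<d(G)$ and assume the case $m=1$, i.e.\ that $w_1(r_K)=0$ for every $2$-group $K$ with $d(K)\ge 2$. Pick a subgroup $K\le G$ with $[G:K]=2^{m-1}$; refining an inclusion $K<\cdots<G$ into index-$2$ steps and using $d(M)\ge d(G)-1$ for maximal $M$, we get $d(K)\ge d(G)-(m-1)\ge 2$. Then
\[
w_{2^{m-1}}(r_G)\big|_K=w_{2^{m-1}}\!\bigl(\mathrm{Res}^G_K r_G\bigr)=w_{2^{m-1}}\!\bigl(r_K^{\oplus\,2^{m-1}}\bigr)=w_1(r_K)^{2^{m-1}}=0,
\]
since in $w(r_K)^{2^{m-1}}=\sum_i w_i(r_K)^{2^{m-1}}$ the only summand of degree $2^{m-1}$ is the one with $i=1$. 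Thus $w_{2^{m-1}}(r_G)$ restricts to $0$ on every subgroup of index $2^{m-1}$; the same computation on an elementary abelian $E\le G$ of rank $r$, where $w(r_E)=\prod_{\ell\in H^1(BE)}(1+\ell)=\sum_{i=0}^{r}c^{E}_{r,i}$ is the sum of the Dickson invariants of $E$ (so $w(\mathrm{Res}^G_E r_G)=\sum_i(c^E_{r,i})^{|G|/2^{r}}$ has no nonzero component in positive degree below $\tfrac12|G|$), shows that $w_j(r_G)\big|_E=0$ for all such $E$ and all $0<j<\tfrac12|G|$. So the whole conjecture comes down to the implication: if $w_{2^{m-1}}(r_G)$ kills every subgroup of index $2^{m-1}$ (equivalently, every elementary abelian subgroup), then $w_{2^{m-1}}(r_G)=0$.

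This last implication is the real content, and it is where the purely formal argument stalls. It does hold for $G$ elementary abelian: there $H^\ast(BG;\mathbb F_2)=\mathbb F_2[H^1(BG)]$, the ideal of classes vanishing on all maximal subgroups is generated by $\prod_{0\ne\ell\in H^1(BG)}\ell$, of degree $2^{d(G)}-1>2^{m-1}$, so the relevant classes are $0$ (equivalently, $w(r_G)=\sum_i c_{d(G),i}$ has its first nonzero term above degree $0$ in degree $2^{d(G)-1}$, recovering Kahn's equality $\nu(G)=d(G)$ in this case). For a general $2$-group it fails: by Quillen's theorem the map $H^\ast(BG;\mathbb F_2)\to\prod_E H^\ast(BE;\mathbb F_2)$ has nonzero kernel, and even all maximal subgroups together may fail to detect $H^2(BG;\mathbb F_2)$ (e.g.\ $G=Q_8$), so vanishing on subgroups only yields nilpotence, not vanishing, of $w_{2^{m-1}}(r_G)$. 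This is where the theory of $d$-maximal groups should enter, to turn the problem into a bounded one: by property $(\mathrm F)$ there is a $d$-maximal $K\le G$ with $d(K)\ge d(G)$, and \emph{if} $\nu$ does not increase on passing to subgroups — which is not a formal consequence of the restriction identity, the transfer being useless since $[G:K]$ is even, so it would have to be proved or replaced by a surrogate — it suffices to treat $d$-maximal $G$, for which the bounds of Kahn and Laffey on $|G|$ in terms of $d(G)$ extended in this note leave, for each value of $d(G)$, only finitely many groups to examine. I expect the genuine obstacle to be precisely one of these two points: the monotonicity of $\nu$ under subgroups, and the explicit identification of $w_{2^{m-1}}(r_G)$ inside the essential cohomology of a $d$-maximal $2$-group.
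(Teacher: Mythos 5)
You were asked to prove Conjecture \ref{Kahn conjecture}, which the paper itself does not prove: it is stated as an open problem (``still widely open''), and the paper's contribution is only the reduction, via Kahn's monotonicity of $\nu$ and property $(\mathrm{F})$, to $d$-maximal $2$-groups, together with structural results about those groups. So there is no proof in the paper to compare yours with, and your proposal, as you concede in your last paragraph, is not a proof either. The formal part of what you do is correct: $\mathrm{Res}^G_H r_G\cong [G:H]\,r_H$, the characteristic-two Frobenius trick, the vanishing $w_1(r_G)=0$ when $d(G)\ge 2$, the conclusion that $w_{2^{m-1}}(r_G)$ restricts to zero on every subgroup of index $2^{m-1}$ (and on every elementary abelian subgroup, by the Dickson-invariant computation), and the fact that this settles the elementary abelian case with $\nu(E)=d(E)$. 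But the remaining implication --- that vanishing on proper (or elementary abelian) subgroups forces $w_{2^{m-1}}(r_G)=0$ --- is exactly the open content; as you yourself note, Quillen's theorem only gives nilpotence, so the argument cannot be closed at this level of generality, and the conjecture remains untouched.

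Two corrections to your closing paragraph. First, the monotonicity $\nu(H)\le\nu(G)$ for $H\le G$ is not an open point: it is a theorem of Kahn, quoted in the introduction of this paper, and it is precisely what legitimizes the reduction via $(\mathrm{F})$ to $d$-maximal $G$; so the first of your ``two genuine obstacles'' is already settled in the literature (you are right, though, that it is not a formal consequence of the restriction identity). Second, your proposed endgame is unsupported: there is no known bound on the order of a $d$-maximal $2$-group in terms of $d(G)$; that boundedness is itself one of the conjectures posed in Section \ref{Last section} of this paper. (For odd $p$ such a bound follows from Theorem \ref{Main 1} and Lemma \ref{Basic facts}, but Conjecture \ref{Kahn conjecture} concerns $2$-groups, where $d$-maximal groups of class $3$ exist and no order bound is known.) Even granting finiteness for each $d(G)$, ``examining'' each group means computing $\nu$, which is the hard part. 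So the gap is genuine and coincides with the open problem itself; what you have is a correct account of the known reductions, not a proof.
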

Among other things, he showed that this invariant
is increasing on subgroups, that is $\nu(H)\leq \nu(G)$ whenever $H\leq G$, so the conjecture reduces, by virtue of ($\mathrm{F}$),  to the  case where $G$ is  $d$-maximal. This conjecture is still widely open; we refer the reader to  Minh \cite{Minh,Minh1997} for more details. 
\medskip

For $p>2$, every   $d$-maximal $p$-group has nilpotency class $\leq 2$ (see \cite[Th\'{e}or\`{e}me A (a)]{Kahn SW}).\footnote{This result is also implicit in  Laffey  \cite{Laffey}.}  We start by  generalizing the latter to $p$-groups with operators.
\begin{theorem}\label{Main 1}
	Let $G$ be a  $p$-group, $p$ odd, and $A$ be   a $p$-group acting on $G$.   If $G$ is  $d$-maximal for $A$-subgroups, then $G$ has nilpotency class  $\leq 2$.  
\end{theorem}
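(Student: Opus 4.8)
The plan is to suppose $\mathrm{cl}(G)\geq 3$ and to produce a proper $A$-subgroup $H$ of $G$ with $d(H)\geq d(G)$, contradicting the hypothesis that $G$ is $d$-maximal for $A$-subgroups. Every auxiliary subgroup I use below ($\gamma_i(G)$, $\Phi(G)$, $C_G(\gamma_2(G))$, $\dots$) is characteristic, hence automatically $A$-invariant, so the operator group only has to be handled once, in the reduction step, via the fact that a $p$-group acts unipotently on any $\mathbb{F}_p$-vector space.

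\textbf{Step 1: reduction to the case $\gamma_3(G)=Z$ central of order $p$.} Since $\gamma_3(G)\neq 1$ and a normal subgroup $N$ of a finite $p$-group satisfying $N=N^p[N,G]$ must be trivial, we have $\gamma_3(G)^p\gamma_4(G)\neq\gamma_3(G)$; thus $\gamma_3(G)/\gamma_3(G)^p\gamma_4(G)$ is a nonzero $\mathbb{F}_p$-space on which the $p$-group $A$ acts unipotently, while $G$ itself acts trivially ($[\gamma_3(G),G]=\gamma_4(G)$ is killed). Hence it has an $A$-submodule of codimension $1$, whose preimage $N$ in $\gamma_3(G)$ is $A$-invariant, normal in $G$, of index $p$ in $\gamma_3(G)$, and contains $\gamma_4(G)$. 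Passing to $G/N$ is harmless: as $N\leq\gamma_3(G)\leq\Phi(G)$ one has $d(G/N)=d(G)$ and $d(H/N)\leq d(H)<d(G)$ for every proper $A$-subgroup $H\geq N$ of $G$, so $G/N$ is again $d$-maximal for $A$-subgroups, and now $\gamma_3(G)=:Z$ is central of order $p$ with $\gamma_4(G)=1$.

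\textbf{Step 2: the witness subgroup.} In this situation $\gamma_2(G)$ is abelian and, for all $g\in G$ and $a\in\gamma_2(G)$, $[a,g^p]=[a,g]^p=1$: indeed $[a,g]\in[\gamma_2(G),G]=Z$ is central of order dividing $p$, so the conjugates occurring in the expansion of $[a,g^p]$ all coincide with $[a,g]$. Hence $\Phi(G)=\gamma_2(G)G^p$ centralizes $\gamma_2(G)$, and $C:=C_G(\gamma_2(G))$ is a characteristic (in particular $A$-invariant) subgroup with $\Phi(G)\leq C<G$, the inclusion proper because $[\gamma_2(G),G]=Z\neq 1$. By $d$-maximality $d(C)<d(G)$, and I aim to contradict this by proving $d(C)\geq d(G)$.

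\textbf{Step 3: $d(C)\geq d(G)$, and the main obstacle.} Since $\Phi(C)=C^p\gamma_2(C)\leq\Phi(G)\leq C\leq G$, the inequality $d(C)\geq d(G)$ is equivalent to $|\Phi(G):\Phi(C)|\geq|G:C|$. The index $|G:C|$ is bounded above by the injective homomorphism $G/C\hookrightarrow\mathrm{Hom}(\gamma_2(G)/Z,\,Z)$, $g\mapsto\bigl(aZ\mapsto[a,g]\bigr)$, which is well defined because $Z$ is central of order $p$. On the other side, $\gamma_2(C)=[C,C]$ is strictly smaller than $\gamma_2(G)$: taking $g_1,\dots,g_c$ extending a generating set of $C$ to one of $G$ with $p^c=|G:C|$, the "new" commutators $[g_i,g_j]$ and $[g_i,x]$ $(x\in C)$ should, via the same pairing, contribute at least $c$ extra dimensions to $\Phi(G)/\Phi(C)$; and — crucially — because $p$ is odd the map $x\mapsto x^p$ on $C$ is a homomorphism modulo $\gamma_2(G)^p$, since the Hall--Petrescu correction term $[\,\cdot\,,\,\cdot\,]^{\binom{p}{2}}$ lies in $\gamma_2(G)^p$, so $C^p$ cannot absorb those extra dimensions. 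I expect this last comparison — the simultaneous bookkeeping of the commutator part and the $p$-th-power part of $\Phi(G)$ against those of $\Phi(C)$, and in particular seeing exactly why $p$ odd is indispensable (the statement genuinely fails for $p=2$) — to be the only real difficulty; Steps 1 and 2 are routine once the subgroup $C=C_G(\gamma_2(G))$ has been singled out.
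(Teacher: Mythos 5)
Your Steps 1 and 2 are essentially the paper's own reduction (pass to a quotient by an $A$-invariant, $G$-central subgroup of index $p$ in $\gamma_3(G)$, so that $\gamma_3$ becomes central of order $p$, and single out the proper $A$-invariant witness $C=C_G(\gamma_2(G))$ together with the pairing $G/C\hookrightarrow\mathrm{Hom}(\gamma_2(G)/Z,\gamma_3(G))$). But Step 3, which is where $p$ odd actually enters and where the contradiction $d(C)\geq d(G)$ must be produced, is not a proof: you only assert that the ``new'' commutators coming from coset representatives of $C$ ``should contribute at least $c$ extra dimensions'' to $\Phi(G)/\Phi(C)$ and that $C^p$ ``cannot absorb'' them, and you explicitly defer this comparison as the remaining difficulty. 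That comparison is precisely the content of the theorem; nothing in your sketch justifies the inequality $\vert\Phi(G):\Phi(C)\vert\geq\vert G:C\vert$, and the Hall--Petrescu remark about $x\mapsto x^p$ on $C$ modulo $\gamma_2(G)^p$ does not by itself control the $p$-th-power part of $\Phi(C)$ against that of $\Phi(G)$.

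There is also a structural misstep that makes your intended bookkeeping harder than necessary: you pair against $\gamma_2(G)/Z$ with $Z=\gamma_3(G)$. The paper instead takes $Z=\gamma_2(G)\cap Z(G)$, which is exactly what makes the count close: since $\gamma_2(G)/Z$ is elementary abelian, the embedding gives $\vert G:C\vert\leq\vert\gamma_2(G):Z\vert$, i.e.\ $p^{d(G)}\leq\vert C:Z\vert$, and then one shows directly that $\Phi(C)\leq Z$, whence $d(C)\geq d(G)$, the desired contradiction. The inclusion $\Phi(C)\leq Z$ is proved in two short steps: $[C,G,C]=[G,C,C]=1$ gives $[C,C]\leq Z$ by the three-subgroups lemma; and for $c\in C$, $g\in G$, the class-$3$ identity $[c,g^{p}]=[c,g]^{p}[c,g,g]^{\binom{p}{2}}$ together with $G^{p}\leq\Phi(G)=\gamma_2(G)$ (so $[c,g^{p}]=1$) and $p\mid\binom{p}{2}$ for $p$ odd (so $[c,g,g]^{\binom{p}{2}}=1$, since $\vert\gamma_3(G)\vert=p$) yields $[c^{p},g]=[c,g]^{p}=1$, i.e.\ $C^{p}\leq Z$. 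Note that with your choice $Z=\gamma_3(G)$ the analogous inclusion $\Phi(C)\leq\gamma_3(G)$ can fail, so the enlargement of $Z$ is not cosmetic. Until the inequality $d(C)\geq d(G)$ is actually established along these (or other) lines, the proposal has a genuine gap at its central step.
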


As a consequence, we have the following generalization of the main result in \cite{Laffey} (the latter  follows by taking $A=\mathrm{Inn}(G)$).

\begin{corollary}\label{Coro Laffey}
	Let $G$ be a  $p$-group, $p$ odd, and $A$ be  a $p$-group acting on $G$.  Then there exists an $A$-subgroup $K\leq G$ such that $d(K)= d(G)$, $K$ has   class $\leq 2$, and $\gamma_2(K)=\Phi(K)$.  
\end{corollary}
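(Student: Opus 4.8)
The plan is to combine the two results already at hand with a short reduction to the abelian case. First, because $A$ is a $p$-group, Lemma~\ref{Second key} (the strengthening of $(\mathrm F)$ announced in the footnote) provides an $A$-subgroup $K\le G$ that is $d$-maximal for $A$-subgroups and satisfies $d(K)=d(G)$. By Theorem~\ref{Main 1} this $K$ has nilpotency class at most $2$, so $\gamma_2(K)$ is central and $K/\gamma_2(K)$ is abelian; being characteristic, $\gamma_2(K)$ is in particular an $A$-subgroup. Since $\Phi(K)=K^{p}\gamma_2(K)$, all that remains is to show that $\bar K:=K/\gamma_2(K)$ is elementary abelian, i.e.\ that $K^{p}\le\gamma_2(K)$.

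The crucial observation is that $\bar K$ inherits $d$-maximality for $A$-subgroups. Indeed $d(\bar K)=d(K)$ (both equal $\dim_{\mathbb F_p}K/\Phi(K)$), and every proper $A$-subgroup of $\bar K$ has the form $H/\gamma_2(K)$ for some $A$-subgroup $H$ with $\gamma_2(K)\le H<K$; being a quotient of $H$, it satisfies $d\bigl(H/\gamma_2(K)\bigr)\le d(H)<d(K)=d(\bar K)$, where the middle inequality is the $d$-maximality of $K$. Now if $\bar K$ were not elementary abelian, then $\Omega_1(\bar K)$ would be a \emph{proper} subgroup of $\bar K$ (it misses any element of order $p^{2}$); it is characteristic, hence an $A$-subgroup; and $d(\Omega_1(\bar K))$ equals the $p$-rank of $\bar K$, which is exactly $d(\bar K)$. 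This contradicts the $d$-maximality of $\bar K$. Therefore $\bar K$ is elementary abelian, $K^{p}\le\gamma_2(K)$, and consequently $\Phi(K)=\gamma_2(K)$, finishing the proof.

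There is no genuine obstacle in the corollary itself: the entire weight is carried by Theorem~\ref{Main 1} and Lemma~\ref{Second key}. The only point requiring (completely routine) verification is that for an abelian $p$-group the passage to $\Omega_1$ leaves the minimal number of generators unchanged, which is immediate from the primary decomposition. If one prefers, the last step can be phrased as a standalone remark: an abelian $p$-group on which a group acts and which is $d$-maximal for invariant subgroups must be elementary abelian.
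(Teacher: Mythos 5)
Your proof is correct and follows essentially the same route as the paper: Lemma~\ref{Second key} supplies a $d$-maximal $A$-subgroup $K$ with $d(K)=d(G)$, and Theorem~\ref{Main 1} gives class $\leq 2$. The only difference is that you re-derive $\Phi(K)=\gamma_2(K)$ by hand (passing to $K/\gamma_2(K)$ and using the $\Omega_1$ argument), which is exactly the content of Lemma~\ref{Basic facts}(1)--(3) that the paper simply cites.
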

Note that such a $K$ is $p$-abelian, i.e., the map $x\mapsto x^{p}$ is an endomorphism of $K$; this follows at once from the identity
$(xy)^{p}=x^{p}y^{p}[y,x]^{\binom{p}{2}}$ (as $K$ has class $\leq 2$),  and the fact that $\gamma_{2}(K)$ has exponent dividing $p$ (see Lemma \ref{Basic facts}(4)). Hence,    $\Omega_1(K)$ has exponent dividing $p$, and  $\vert K:K^{p}\vert = \vert \Omega_1(K)\vert $; in particular, $p^{d(K)}\leq  \vert \Omega_1(K)\vert$.   The latter implies that for every $A$-subgroup $H\leq G$, we have $d(H)\leq k$,  where $p^{k}$ is the  maximal order of an $A$-subgroup of $G$ which has exponent $p$ and  class $\leq 2$.
\medskip

The situation for $p=2$ turned out to be  much more intricate.  For instance, Minh (see \cite{Minh}) has constructed $d$-maximal $2$-groups of class $3$ (and order $2^{8}$);  the latter involves cohomology and subtle calculations with factor sets.  Nowadays, one can find several of such examples using  the Small Groups library of GAP (see \cite{GAP}); this issue will be discussed in  more detail in Section \ref{Last section}.  

General  results  on the structure of $d$-maximal $2$-groups were obtained by Kahn in \cite{Kahn APE}.  The second aim  of this note  is to give simpler and more transparent proofs of these results, and to generalize them at the same time  to $d$-maximal $2$-groups with operators.  First we shall adapt the proof of Theorem \ref{Main 1}  to prove the following key result.
\begin{theorem}\label{Kahn's modified 1}
	Let $G$ be a  $2$-group, and $A$ be  a $2$-group acting on $G$.   If $G$ is  $d$-maximal for $A$-subgroups, then $\gamma_3(G)=\gamma_2(G)^{2}$.  
\end{theorem}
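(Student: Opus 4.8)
The plan is to mimic the argument used for Theorem \ref{Main 1}, but pushed one step up the lower central series. Set $\bar{G} = G/\gamma_2(G)^2$, so that $\gamma_2(\bar G)$ has exponent dividing $2$; we want to show $\gamma_3(\bar G) = 1$, i.e.\ $\bar G$ has class $\leq 2$. Suppose not. Then I would look for a proper $A$-subgroup $H < \bar G$ with $d(H) \geq d(\bar G)$, contradicting $d$-maximality (note $d(\bar G) = d(G)$ since $\gamma_2(G)^2 \leq \Phi(G)$, and any proper $A$-subgroup of $\bar G$ pulls back to a proper $A$-subgroup of $G$ of the same generator number). So the whole game is: assuming class $\geq 3$, produce such an $H$.

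The natural candidate is $H = \Phi(\bar G)\,C$, or more precisely an $A$-subgroup sitting between $\Phi(\bar G)$ and $\bar G$ of index $2$ chosen to be $A$-invariant — since $\bar G/\Phi(\bar G)$ is an $\mathbb{F}_2[A]$-module and $A$ is a $2$-group, it has a nonzero fixed vector, hence a maximal $A$-submodule, giving an $A$-invariant maximal subgroup $M < \bar G$. The key computation is then to compare $d(M)$ with $d(\bar G)$. One has $\Phi(M) = M^2 \gamma_2(M)$, and the standard estimate $d(M) \geq d(\bar G) - 1 + (\text{dimension of } \gamma_2(\bar G)/(\gamma_2(\bar G)\cap \Phi(M)))$-type correction. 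The crucial point, exactly as in the odd-$p$ case, is that when the class is $\geq 3$ the commutator $[\gamma_2(\bar G), \bar G] = \gamma_3(\bar G)$ is nontrivial and, because $\gamma_2(\bar G)$ has exponent $2$, contributes ``extra'' generators to $M$ that were not needed in $\bar G$: elements of $\gamma_2(\bar G) \setminus M^2\gamma_2(M)$. I would make this precise by choosing $M$ to contain $\gamma_2(\bar G)$ (possible since $\gamma_2(\bar G) \leq \Phi(\bar G) \leq M$) and then analyzing $\gamma_2(\bar G) \cap \Phi(M)$: since $M \trianglelefteq \bar G$ with $\bar G/M$ of order $2$, $\gamma_2(M)$ misses the part of $\gamma_3(\bar G)$ ``coming from'' the element outside $M$, and $M^2$ is controlled because $\gamma_2(\bar G)^2 = 1$ in $\bar G$.

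The main obstacle I anticipate is the bookkeeping in the inequality $d(M) \geq d(\bar G)$: one needs to carefully track how $\Phi(M) \cap \gamma_2(\bar G)$ relates to $\gamma_3(\bar G)$ and to $\bar G^2 \cap \gamma_2(\bar G)$, using the identity $(xy)^2 = x^2 y^2 [y,x] \pmod{\gamma_3}$ and the fact that squares of elements of $\gamma_2$ are trivial in $\bar G$. Concretely, if $t \in \bar G \setminus M$, then every element of $M$ is $m$ or $tm t^{-1}\cdot(\text{stuff})$... rather, $M = \langle \Phi(\bar G), \text{lifts of an } \mathbb F_2[A]\text{-basis of } M/\Phi(\bar G)\rangle$, and its Frattini quotient picks up $\gamma_2(M)$; the containment $\gamma_3(\bar G) = [\gamma_2(\bar G), \bar G] \not\leq \gamma_2(M)$ in general (it needs $t$), so a complement to $\gamma_2(M)\bar G^2$ inside $\gamma_2(\bar G)$ of positive dimension survives in $M/\Phi(M)$, yielding $d(M) > d(\bar G) - 1$, hence $d(M) \geq d(\bar G)$. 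Showing that this surviving dimension is at least $1$ exactly when $\gamma_3(\bar G) \neq 1$ is the heart of the matter, and it is where the hypothesis $p = 2$ (so that $\gamma_2(\bar G)$, not $\gamma_2(\bar G)$ times $p$-th powers, is what has exponent $2$) is used in the same spirit as Kahn's original argument. Once this is established, minimality of $\bar G$ as a $d$-maximal group forces $\gamma_3(\bar G) = 1$, which is the claim.
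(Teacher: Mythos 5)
Your overall framing (pass to $\bar G=G/\gamma_2(G)^2$, note that $\bar G$ is still $d$-maximal for $A$-subgroups, and derive a contradiction from $\gamma_3(\bar G)\neq 1$ by exhibiting a proper $A$-subgroup with at least $d(\bar G)$ generators) is sound, but the step that would actually produce such a subgroup is missing, and the mechanism you propose for it is doubtful. You restrict attention to \emph{maximal} subgroups $M$ of $\bar G$ and hope to show $d(M)\geq d(\bar G)$, i.e.\ $\Phi(M)\subsetneq\Phi(\bar G)$, because some part of $\gamma_2(\bar G)$ ``needs $t\notin M$'' to land in $\Phi(M)$. For a maximal subgroup one always has $d(M)\geq d(\bar G)-1$, so everything hinges on gaining that one extra dimension; but since $\gamma_2(\bar G)\leq M$ and $|\bar G:M|=2$, the commutators $[\gamma_2(\bar G),m]$ with $m\in M$ may already generate all of $\gamma_3(\bar G)$, and squares of elements of $M$ (including elements of the form $mt\cdot t^{-1}$-type rewritings you allude to) are not controlled, so there is no argument that $\gamma_3(\bar G)\not\leq\Phi(M)$, or more generally that $\Phi(M)\neq\Phi(\bar G)$, for \emph{any} maximal $M$ --- let alone for one that is simultaneously $A$-invariant (your fixed-point argument gives existence of some $A$-invariant maximal subgroup, not of one with the special property you need). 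You yourself flag this as ``the heart of the matter,'' and it is exactly the part that is not done; as it stands the proposal is a plan, not a proof, and I see no reason the plan goes through as stated.

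For comparison, the paper avoids maximal subgroups entirely. It first reduces further: choosing a maximal subgroup $N$ of $\gamma_3(G)$ containing $[\gamma_3(G),\Gamma]\gamma_2(G)^2$ (with $\Gamma=G\rtimes A$) and passing to $G/N$ (Lemma \ref{Basic facts}(2)), one may assume $\gamma_2(G)^2=1$ and $|\gamma_3(G)|=2$. Then the proper subgroup used is the canonically $A$-invariant $C=C_G(\gamma_2(G))$ (proper because the class is $3$): the homomorphism $g\mapsto [g,\cdot]$ from $G$ to $\mathrm{Hom}(\gamma_2(G)/Z,\gamma_3(G))$, where $Z=\gamma_2(G)\cap Z(G)$, has kernel $C$, giving $|G:C|\leq|\gamma_2(G):Z|$, i.e.\ $2^{d(G)}\leq |C:Z|$; and $\Phi(C)\leq Z$ because $[C,C]\leq Z$ (three-subgroups lemma) and $[c^2,g]=[c,g]^2=1$ forces $C^2\leq Z$. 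Hence $d(C)\geq d(G)$, contradicting $d$-maximality. If you want to salvage your approach, the realistic fix is to adopt this centralizer-plus-counting argument (as in Theorem \ref{Main 1}) rather than to look for a maximal subgroup with too many generators.
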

Following \cite{Kahn APE}, we say that $N\leq G$ is \textit{almost powerfully embedded} in $G$  if	it satisfies: (i) $[N,G]\leq N^{2}$, and (ii) $[N,N]\leq (N^{2})^{2}$. One easily sees  that (ii) is equivalent to the fact that  $N$ is powerful, that is  $[N,N]\leq N^{4}$.  For the theory of powerful $p$-groups, we refer the reader to   \cite{MannLubotzky} or \cite[Chapter 2]{DDMS}.     

To keep the exposition as clear as possible, let us quote the following result from \cite{Kahn APE}.  

\begin{theorem}(\cite[Theorem 1(1)]{Kahn APE})\label{Kahn's almost powerfuly embed}
	Let $G$ be a  $2$-group, and $N\leq G^2$.  If $[N,G]\leq N^{2}$, then $[N,G^{2}]\leq (N^{2})^{2}$. In particular, $N$ is almost powerfully embedded in $G$.  
\end{theorem}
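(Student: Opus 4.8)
The plan is to make two routine reductions and then attack what becomes a single assertion about $N$.

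\emph{Reductions.} First, $[N,G]\le N^2\le N$ already forces $N\trianglelefteq G$, so $N^2=\Phi(N)$ and $(N^2)^2$ are characteristic in $N$, hence normal in $G$; passing to $\bar G=G/(N^2)^2$ preserves the hypotheses — one checks $\bar N\le\bar G^{\,2}$, $[\bar N,\bar G]\le\bar N^{\,2}$, and now $(\bar N^{\,2})^2=1$ — while the conclusion becomes $[\bar N,\bar G^{\,2}]=1$. So I may assume
\[
(N^2)^2=1,\quad\text{i.e. }N^2=\Phi(N)\text{ is elementary abelian},
\]
and must show $[N,G^2]=1$. Since $N\trianglelefteq G$, $G^2$ is generated by squares, and $[n,g^2]^h=[n^h,(g^h)^2]$, the group $[N,G^2]$ is generated by the commutators $[n,g^2]$ with $n\in N$, $g\in G$; expanding $[n,g^2]=[n,g]\,[n,g]^g=[n,g]^2[n,g,g]$ and using $[n,g]\in N^2$, $[n,g]^2=1$ gives $[n,g^2]=[n,g,g]$. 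The analogous expansion $[m^2,g]=[m,g]^m[m,g]=[m,g]^2\,[m,g,m]^{[m,g]}$, again with $[m,g]^2=1$, shows $[N^2,G]\le[N^2,N]$, hence $[N^2,G]=[N^2,N]$. Putting these together, in the reduced situation the following are equivalent: $[N,G^2]=1$;\ $[N^2,G]=1$;\ $N$ is abelian — the step ``$[N,G^2]=1\Rightarrow N$ abelian'' uses $[N,N]\le[N,G^2]$, which is exactly where $N\le G^2$ enters.

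\emph{The core.} Thus the whole statement reduces to: \emph{if $N\trianglelefteq G$, $N\le G^2$, $[N,G]\le N^2$ and $(N^2)^2=1$, then $N$ is abelian.} I would prove this by induction on $|G|$. In a minimal counterexample, quotienting by a minimal normal subgroup of $G$ inside $[N,N]$ forces $[N,N]=\langle w\rangle$ to have order $2$ with $w\in Z(G)\cap N^2$, so there are $a,b\in N$ with $[a,b]=w\ne1$. Now I spend $N\le G^2$: writing $a=\prod_i h_i^{\,2}$ and $b=\prod_j k_j^{\,2}$ with $h_i,k_j\in G$ and expanding $[a,b]$ by the collection process, the leading contribution lies in $G^4\le(G^2)^2$ and the corrections in $\gamma_3(G),\gamma_4(G),\dots$; confining those corrections to $(N^2)^2=1$ contradicts $w\ne1$. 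Granting $[N,G^2]\le(N^2)^2$, the final clause is immediate: (i) $[N,G]\le N^2$ is the hypothesis, and (ii) $[N,N]\le[N,G^2]\le(N^2)^2$ because $N\le G^2$; so $N$ is almost powerfully embedded in $G$.

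\emph{Main obstacle.} The difficulty is concentrated entirely in the last step: the iterated expansions of $\bigl[\prod_i h_i^{\,2},\ \prod_j k_j^{\,2}\bigr]$ spill terms into $\gamma_3(G)$ and lower, and keeping them inside $(N^2)^2$ forces one to play $N\le G^2$ against the internal structure of $G^2$ — via squares-versus-$\gamma_3$ identities of the kind underlying Theorem~\ref{Kahn's modified 1}, applied in suitable sections of $G$. Making that interplay precise is the technical heart of Kahn's argument in \cite{Kahn APE}; everything preceding it is formal commutator calculus.
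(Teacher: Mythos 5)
Your two reductions and the chain of equivalences ($[N,G^2]=1\Leftrightarrow[N^2,G]=1\Leftrightarrow N$ abelian, after killing $(N^2)^2$) are correct formal commutator calculus, but they merely repackage the theorem: the ``core'' claim you isolate is exactly equivalent to the statement to be proved, and your argument for it is where the proposal breaks down. In the minimal counterexample with $[N,N]=\langle w\rangle$ of order $2$, the step ``write $a=\prod_i h_i^2$, $b=\prod_j k_j^2$, expand $[a,b]$ by collection; the leading contribution lies in $G^4$ and the corrections in $\gamma_3(G),\gamma_4(G),\dots$ are confined to $(N^2)^2=1$'' is not an argument: the correction terms produced by collecting $\bigl[\prod_i h_i^2,\prod_j k_j^2\bigr]$ live in $[G^2,G^2]$, $\gamma_3(G)$, etc., and none of these subgroups has any containment relation with $(N^2)^2$, or indeed with $N$ at all; the hypotheses $[N,G]\le N^2$ and $(N^2)^2=1$ say nothing about $\gamma_3(G)$. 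You acknowledge this yourself by deferring to ``the technical heart of Kahn's argument,'' so the proof is missing precisely at the point where the theorem's content sits.

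The fix is to strengthen your reduction, which is what the paper does: apply the standard trick ($X\le YX^2[X,G]\Rightarrow X\le Y$ for $X,Y\trianglelefteq G$) with $X=[N,G^2]$ and $Y=(N^2)^2$, so that one may assume not only $(N^2)^2$-type vanishing but the much stronger $[N,G^2,G]=1$ and $[N,G^2]^2=1$. Since $N\le G^2$, these give $[N,N,G]=1$ and $[N,N]^2=1$; then for $a,b\in N$, $[a^2,b]=[a,b]^2[a,b,a]=1$, so $[N^2,N]=1$ and hence $[N,G,N]\le[N^2,N]=1$; then $[a^2,g]=[a,g]^2\in(N^2)^2$ gives $[N^2,G]\le(N^2)^2$; finally $[a,g^2]=[a,g]^2[a,g,g]\in(N^2)^2$ yields $[N,G^2]\le(N^2)^2$. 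No induction and no collection process are needed; the only place $N\le G^2$ enters is in transferring the WLOG hypotheses on $[N,G^2]$ to $[N,N]$, exactly the role you identified for it but could not exploit with your weaker reduction.
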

The proof of Theorem \ref{Kahn's almost powerfuly embed} given in \cite{Kahn APE}   is unnecessarily complicated; here is a  more direct one. 	   Without loss of generality, we may assume $[N,G^{2},G]=1$ and $[N,G^{2} ]^{2}=1$ (cf.  the introduction of Section \ref{By-product of being powerful}).  It follows in particular that $[N,N,G]=1$ and $[N,N]^{2}=1$.  Now, for $a,b\in N$, we have $[a^{2},b]=[a,b]^{2}=1$; hence $[N^{2},N]=1$, and so $[N,G,N]=1$.  Next, for $g\in G$, we have
$[a^{2},g]=[a,g]^{2}$, which lies in  $ (N^{2})^{2}$; thus $[N^{2},G]\leq (N^{2})^{2}$.  Finally, we have $[a,g^{2}]=[a,g]^{2}[a,g,g] $, but $[a,g]^{2}\in (N^{2})^{2}$ and $[a,g,g]\in  [N^{2},G]\leq (N^{2})^{2}$; thus $[N,G^{2}]\leq (N^{2})^{2}$, which  completes the proof.
\medskip

The last two theorems imply at once that if $G$ is $d$-maximal for $A$-subgroups ($A$ is a $2$-group), then $\gamma_2(G)$ is
almost powerfully embedded in $G$.  More generally, we have:

\begin{corollary}\label{Cor main 2}
  If $G$ is  $d$-maximal for $A$-subgroups, where $A$ is a $2$-group,  then  $\gamma_n(G)$ is almost powerfully embedded in $G$, and  $\gamma_n(G)=P_{n}(G)=G^{2^{n-1}}$, for all $n\geq 2$.	
\end{corollary}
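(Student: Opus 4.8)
The plan is to bootstrap from the case $n=2$, which is already in hand: by Theorems \ref{Kahn's modified 1} and \ref{Kahn's almost powerfuly embed}, $\gamma_2(G)$ is almost powerfully embedded in $G$, meaning $[\gamma_2(G),G]\leq \gamma_2(G)^2$ and $\gamma_2(G)$ is powerful. The first step is to establish the chain of equalities for $n=2$, i.e.\ $\gamma_2(G)=P_2(G)=G^2$. The inclusion $\gamma_2(G)\leq P_2(G)$ is automatic from the definition of the lower $p$-central series, and $P_2(G)=\Phi(G)=\gamma_2(G)G^2$ always holds, so the content is that $G^2\leq \gamma_2(G)$; equivalently $G/\gamma_2(G)$ has exponent $\leq 2$, i.e.\ $d(G)=\log_2|G:\gamma_2(G)|$. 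This should follow from $d$-maximality: if some $x\in G$ had $x^2\notin\gamma_2(G)$, one produces a proper $A$-subgroup $H$ (for instance the preimage of a suitable hyperplane of $G/\Phi(G)$, intersected appropriately, or using $\langle x^2\rangle$-style arguments together with $A$-invariance via Lemma \ref{Second key}) with $d(H)\geq d(G)$, contradicting $d$-maximality. I expect the cleanest route is: Theorem \ref{Main 1}'s analogue here forces $G^2=\Phi(G)$ directly, or one invokes that $\Omega_1$ of the relevant $p$-abelian section has the right order as in the remark following Corollary \ref{Coro Laffey}.

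The second step is the induction on $n$. Assume $\gamma_n(G)=P_n(G)=G^{2^{n-1}}$ and that $\gamma_n(G)$ is almost powerfully embedded. Since $\gamma_2(G)$ is powerfully (indeed almost powerfully) embedded in $G$, powerful-$p$-group theory gives $[\gamma_2(G),G]=\gamma_2(G)^2$ exactly, and more generally, for a powerfully embedded $N$ one has $[N^{2^{k}},G]= [N,G]^{2^k}$ and $(N^{2^j})^{2^k}=N^{2^{j+k}}$, together with $[N,G,\ldots,G]$ ($k$ times) $=N^{2^k}=[N,G]^{2^{k-1}}$ type identities (see \cite[Chapter 2]{DDMS} or \cite{MannLubotzky}). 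Applying these with $N=\gamma_2(G)$: $\gamma_{n+1}(G)=[\gamma_n(G),G]=[\,[\gamma_2(G),\underbrace{G,\ldots,G}_{n-2}\,],G]$, and the almost-powerful-embedding identities collapse this to $\gamma_2(G)^{2^{n-1}}=G^{2^n}$, using $\gamma_2(G)=G^2$ from Step 1 and $2$-abelianness ($G$ has class $\leq 2$? — no, here class may be $3$, so one uses instead that $G^2=\gamma_2(G)$ is powerful, hence $p$-abelian, so $(G^2)^{2^{n-1}}=G^{2^n}$). Simultaneously $P_{n+1}(G)=[P_n(G),G]P_n(G)^2 = \gamma_{n+1}(G)\cdot(G^{2^{n-1}})^2 = \gamma_{n+1}(G)\cdot G^{2^n}=\gamma_{n+1}(G)$, closing that equality. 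Finally, almost-powerful-embedding of $\gamma_{n+1}(G)$ follows from Theorem \ref{Kahn's almost powerfuly embed} applied with $N=\gamma_{n+1}(G)\leq \gamma_2(G)=G^2$ and the inclusion $[\gamma_{n+1}(G),G]\leq\gamma_{n+1}(G)^2$, which is itself $[N,G]\leq N^2$ — obtained from the powerful-embedding calculus on $\gamma_2(G)$ (since $\gamma_{n+1}(G)=\gamma_2(G)^{2^{n-1}}$ and $[\gamma_2(G)^{2^{n-1}},G]=[\gamma_2(G),G]^{2^{n-1}}\leq (\gamma_2(G)^2)^{2^{n-1}}=\gamma_{n+1}(G)^2$).

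I would organize the write-up so that the one genuinely new input is Step 1 ($G^2=\gamma_2(G)$ under $d$-maximality), and everything else is the standard powerful-$p$-group dictionary applied to $N=\gamma_2(G)$, whose almost powerful embedding is the combined content of Theorems \ref{Kahn's modified 1} and \ref{Kahn's almost powerfuly embed}. The main obstacle I anticipate is making Step 1 airtight: one must rule out $G^2\not\leq\gamma_2(G)$ using only the existence of a proper \emph{$A$-invariant} subgroup of full generator rank, which is where Lemma \ref{Second key} (the $p$-group strengthening of (F)) is likely needed — specifically, that an $A$-subgroup $H\leq G$ with $d(H)=d(G)$ and $H\neq G$ can be extracted whenever $\Phi(G)\neq G^2$, perhaps by taking $H$ to be a maximal $A$-subgroup containing $\gamma_2(G)$ but chosen so that its image in the $\mathbb{F}_2$-space $G/\gamma_2(G)G^2$ is everything — the delicate point being the interaction of $A$-invariance with the requirement $d(H)\geq d(G)$. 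A secondary, purely bookkeeping obstacle is confirming the exact exponents in the powerful-calculus identities ($[N,G]=N^2$ versus $\leq N^2$, and the iterated versions), but these are routine given \cite[Chapter 2]{DDMS}.
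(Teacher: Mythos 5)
Your base case is fine, but the point you flag as ``the main obstacle'' is not one: the inclusion $G^{2}\leq\gamma_2(G)$ is exactly Lemma \ref{Basic facts}(3) ($\Phi(G)=[G,G]$, proved by noting that $G/\gamma_2(G)$ is $d$-maximal for $A$-subgroups and abelian, hence elementary abelian), while the reverse inclusion $\gamma_2(G)\leq G^{2}$ holds in every group because $G/G^{2}$ has exponent $2$ and is therefore abelian. So no new $A$-invariance argument is needed for Step 1; the paper simply quotes Theorems \ref{Kahn's modified 1}, \ref{Kahn's almost powerfuly embed} and Lemma \ref{Basic facts}(3).

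The genuine gaps are in your induction step. First, ``powerful, hence $p$-abelian'' is false: powerful $2$-groups need not be $2$-abelian (the modular group of order $16$ is powerful but $(ab)^{2}\neq a^{2}b^{2}$ there), so your justification of $(G^{2})^{2^{n-1}}=G^{2^{n}}$ collapses as written. Second, the ``standard dictionary'' identities you invoke, such as $[N^{2^{k}},G]=[N,G]^{2^{k}}$ and the iterated-commutator formulas, are not available off the shelf: for $p=2$ the Lubotzky--Mann/DDMS notion of powerfully embedded requires $[N,G]\leq N^{4}$, which is strictly stronger than Kahn's almost powerful embedding ($[N,G]\leq N^{2}$ plus $N$ powerful), and the equalities (rather than inclusions) you need secretly use the $d$-maximality input $\gamma_n(G)^{2}\leq\gamma_{n+1}(G)$, i.e.\ Lemma \ref{Basic facts}(4), which is no consequence of powerful-group theory. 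This is precisely the pitfall the paper warns about: Kahn's own appeal to Lubotzky--Mann yields only $P_n(G)=(G^{2})^{2^{n-2}}$, and identifying $(G^{2})^{2^{n-2}}$ with $G^{2^{n-1}}$ is the delicate point you are treating as routine. The paper instead runs two short inductions: $P_{n+1}(G)=P_n(G)^{2}=\gamma_n(G)^{2}=\gamma_{n+1}(G)$ using the a.p.e.\ hypothesis, Lemma \ref{Basic facts}(4), and Lemma \ref{Generating almost p.e. subgroups}(i) (which gives a.p.e.\ of $\gamma_{n+1}(G)=\gamma_n(G)^{2}$ directly, with no second appeal to Theorem \ref{Kahn's almost powerfuly embed}); then $P_{n+1}(G)=G^{2^{n}}$ from the observation $[P_n(G),P_n(G)]\leq P_{n+2}(G)=P_{n+1}(G)^{2}$, so $P_n(G)/P_{n+1}(G)^{2}$ is abelian, squaring a product of $2^{n-1}$-th powers gives $P_n(G)^{2}=G^{2^{n}}P_{n+1}(G)^{2}$, and a Frattini-type argument finishes. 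Your route could be repaired (the equality $(G^{2})^{2^{n-1}}=G^{2^{n}}$ does follow from the generator-power theorem for the powerful group $\gamma_2(G)=G^{2}$, and the a.p.e.\ propagation from Lemma \ref{Generating almost p.e. subgroups}(i)), but as written the crucial steps are either false or unproved.
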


Next, we have the following criterion for $d$-maximality.
\begin{corollary}\label{Equivalence for d-maximality}
	Let $G$ be a  $2$-group, and $A$  a $2$-group acting on $G$.  Then the following assertions are equivalent:
	\begin{itemize}
		\item[(i)] $G$ is $d$-maximal for $A$-subgroups.
		\item[(ii)] $G/\gamma_3(G)$ is $d$-maximal for $A$-subgroups.
		\item[(iii)]  $d(H)<d(G)$, for every  $A$-subgroup $H<G$ containing $\gamma_2(G)$.
	\end{itemize}  
\end{corollary}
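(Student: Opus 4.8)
The plan is to establish the cycle of implications (i)$\Rightarrow$(ii)$\Rightarrow$(iii)$\Rightarrow$(i). Write $\bar G=G/\gamma_3(G)$. Since $\gamma_3(G)\le\gamma_2(G)\le\Phi(G)$, passing from $G$ to $\bar G$ changes neither $d$ nor the Frattini quotient; the $A$-subgroups of $\bar G$ are exactly the images of the $A$-subgroups of $G$ that contain $\gamma_3(G)$, and every $A$-subgroup $H$ with $\gamma_2(G)\le H$ already contains $\gamma_3(G)$. Granting this, (i)$\Rightarrow$(ii) is immediate: the full preimage $H<G$ of a proper $A$-subgroup $\bar H<\bar G$ is again a proper $A$-subgroup ($\gamma_3(G)$ being characteristic), and $\bar H$ is a quotient of $H$, so $d(\bar H)\le d(H)<d(G)=d(\bar G)$. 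And (i)$\Rightarrow$(iii) is merely the restriction of the definition of $d$-maximality to the $A$-subgroups containing $\gamma_2(G)$, so there is nothing to do.

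For (ii)$\Rightarrow$(iii) I would first read off structural information. Applying Theorem~\ref{Kahn's modified 1} and Corollary~\ref{Cor main 2} to the $d$-maximal group $\bar G$ yields $\gamma_2(\bar G)=\bar G^{2}$ and $\gamma_3(\bar G)=\gamma_2(\bar G)^{2}$; these pull back to $\gamma_2(G)=G^{2}\gamma_3(G)$ — so that $\Phi(G)=G^{2}\gamma_2(G)=\gamma_2(G)$ — and to $\gamma_2(G)^{2}\le\gamma_3(G)$. Now let $H$ be a proper $A$-subgroup with $\gamma_2(G)\le H$. Then $\bar H=H/\gamma_3(G)$ is a proper $A$-subgroup of $\bar G$, so $d(\bar H)<d(\bar G)=d(G)$, and it is enough to prove $d(H)=d(\bar H)$, i.e.\ $\gamma_3(G)\le\Phi(H)$. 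From $\gamma_2(G)\le H$ one gets $\gamma_2(G)^{2}\le H^{2}\le\Phi(H)$ and $[\gamma_2(G),\gamma_2(G)]\le[H,H]\le\Phi(H)$, hence $\Phi(\gamma_2(G))\le\Phi(H)$; the remaining inclusion $\gamma_3(G)\le\Phi(H)$ is then forced by a collection computation exploiting $\gamma_2(G)=G^{2}\gamma_3(G)$ and $\gamma_2(G)^{2}\le\gamma_3(G)$ — the point being that for $[x,g]\in\gamma_2(G)\le H$ the commutator $[x^{2},g]$ agrees modulo $\gamma_3(G)$ with the square $[x,g]^{2}\in\Phi(H)$ — combined with an induction on the nilpotency class of $G$ (one may pass to $G/\gamma_4(G)$, whose relevant quotient is again $\bar G$, and use that $\Phi(H)\trianglelefteq G$ since $H\trianglelefteq G$). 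I expect this collection-and-induction step to be the main technical obstacle.

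Finally, (iii)$\Rightarrow$(i). Assume (iii) and suppose, for contradiction, that $G$ is not $d$-maximal for $A$-subgroups. Choose a proper $A$-subgroup $K$ maximal, with respect to inclusion, among the $A$-subgroups $S<G$ with $d(S)\ge d(G)$. Then $K\gamma_2(G)$ is a proper $A$-subgroup of $G$ — it lies in $K\Phi(G)\ne G$ — that contains $K$. Granting the monotonicity $d(K)\le d(K\gamma_2(G))$, which is the refinement of~($\mathrm{F}$) available for $p$-group actions (cf.\ Lemma~\ref{Second key}), we get $d(K\gamma_2(G))\ge d(K)\ge d(G)$, so $K\gamma_2(G)=K$ by maximality, i.e.\ $\gamma_2(G)\le K$; applying (iii) to $K$ then gives $d(K)<d(G)$, a contradiction. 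What has to be supplied here is precisely the inequality $d(H)\le d(H\gamma_2(G))$ for an $A$-subgroup $H$: via the modular law applied to $\Phi(H)\le\Phi(H\gamma_2(G))\le\Phi(H)\gamma_2(G)$ it reduces to $H\cap\gamma_2(G)\cap\Phi(H\gamma_2(G))\le\Phi(H)$, which follows from $[H,\gamma_2(G)]\le\gamma_3(G)$ and the fact that the squares of elements of $H\cap\gamma_2(G)$ lie in $H^{2}\le\Phi(H)$; for the groups of class $\le 2$ to which one ultimately reduces this is elementary, and in general it is the content of Lemma~\ref{Second key}.
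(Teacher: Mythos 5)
Your skeleton is the paper's: the cycle (i)$\Rightarrow$(ii)$\Rightarrow$(iii)$\Rightarrow$(i), the reduction of (ii)$\Rightarrow$(iii) to $\gamma_3(G)\le\Phi(H)$, and the reduction of (iii)$\Rightarrow$(i) to $d(K)\le d(K\gamma_2(G))$. But at both non-trivial junctures the decisive step is left unproved. In (ii)$\Rightarrow$(iii), from hypothesis (ii) you only extract $\gamma_2(G)^2\le\gamma_3(G)$ (Theorem~\ref{Kahn's modified 1} applied to $G/\gamma_3(G)$, whose third term is trivial); what is needed is the \emph{opposite} inclusion $\gamma_3(G)\le\gamma_2(G)^2$, since then $\gamma_3(G)\le\gamma_2(G)^2\le H^2\le\Phi(H)$ at once. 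You defer precisely this to ``a collection computation plus induction on the class'' and call it the main obstacle; but the identity you cite, $[x^2,g]\equiv[x,g]^2\pmod{\gamma_3(G)}$, is a congruence modulo $\gamma_3(G)$ and cannot detect whether $\gamma_3(G)$ itself lies in $\Phi(H)$ --- pushed further it only regenerates elements $[c,x]$ with $c\in\gamma_2(G)$ and goes in circles. The paper closes this by the equality $\gamma_3(G)=\gamma_2(G)^2$ of Theorem~\ref{Kahn's modified 1}; under hypothesis (ii) alone one obtains it by rerunning that theorem's proof (the reduction modulo a maximal $\Gamma$-invariant subgroup of $\gamma_3(G)$ containing $[\gamma_3(G),\Gamma]\gamma_2(G)^2$ preserves (ii), and the contradicting subgroup $C=C_G(\gamma_2(G))$ contains $\gamma_3(G)$, so (ii) suffices for the contradiction). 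Some such argument must be supplied; it does not come for free.

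In (iii)$\Rightarrow$(i) the inequality $d(K)\le d(K\gamma_2(G))$ \emph{is} the content of the implication, and you essentially assume it: Lemma~\ref{Second key} does not say this --- it asserts the existence of $d$-maximal $A$-subgroups of each rank below the maximum and gives no monotonicity of $d$ under joining $\gamma_2(G)$ (indeed $d$ is not monotone on subgroups in general; that failure is the whole point of the corollary). Your fallback reduction to $H\cap\gamma_2(G)\cap\Phi(H\gamma_2(G))\le\Phi(H)$ is likewise unjustified: elements of $\Phi(H\gamma_2(G))=(H\gamma_2(G))^2$ are products of squares $(hc)^2=h^2\,c^h c$, and when such a product lands in $H\cap\gamma_2(G)$ the facts you cite give no reason for it to lie in $\Phi(H)$ --- you would need at least $\gamma_2(G)^2\le\Phi(H)$ and $[\gamma_2(G),H]\le\Phi(H)$ for an arbitrary proper $A$-subgroup $H$, which is again the hard point. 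The paper supplies exactly the two missing inputs: $[H,\gamma_2(G)]\le\gamma_3(G)=\gamma_2(G)^2$ makes Proposition~\ref{Key for generator's number} applicable, giving $d(H\gamma_2(G))\ge d(H)+d(\gamma_2(G))-d(H\cap\gamma_2(G))$, and the powerfulness of $\gamma_2(G)$ (Corollary~\ref{Cor main 2}, combined with rank monotonicity in powerful $2$-groups, \cite[Theorem 2.9]{DDMS}) yields $d(H\cap\gamma_2(G))\le d(\gamma_2(G))$, whence $d(H\gamma_2(G))\ge d(H)$; then (iii) applied to $H\gamma_2(G)\supseteq\gamma_2(G)$ finishes. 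Without these ingredients (or an equivalent substitute) your argument does not close.
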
	
The above shows in particular that  $G$ is $d$-maximal if, and only if, it is $d$-maximal for normal subgroups (since every subgroup containing $\gamma_{2}(G)$ is normal).   Note also that the last two corollaries imply   \cite[Theorem 2]{Kahn APE}.
\medskip

As an application,  we shall answer a question of Berkovich (see \cite[Problem 1850 (i)]{Berk3}) about  the structure of the $2$-groups $G$ for which $G/G^{4}$ is minimal non-metacyclic  (a  group $X$ is termed    \textit{minimal  non-metacyclic} if it is not  metacyclic, but all its proper subgroups  are).  
\begin{theorem}\label{Nonmetacyclic}
	Let $G$ be a  $2$-group, and $A$  a $2$-group acting on $G$.
	\begin{itemize}
		\item[(i)] If $G/G^{4}$ is $d$-maximal for $A$-subgroups, then so is $G$.  
		\item[(ii)] If $G/G^{4}$ is  minimal non-metacyclic, then so is $G$.
	\end{itemize}  
\end{theorem}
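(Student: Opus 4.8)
The plan is to establish (i) first, by reducing to nilpotency class $\le 2$, and then to deduce (ii) from (i) together with Blackburn's classification of minimal non-metacyclic $2$-groups. For (i): since $G^{4}$ and $\gamma_{3}(G)$ are characteristic, $A$ acts on every quotient of $G$ by them. I would first apply Corollary~\ref{Cor main 2} to $\overline{G}:=G/G^{4}$ — which is $d$-maximal for $A$-subgroups and satisfies $\overline{G}^{4}=1$ — to get $\gamma_{3}(\overline{G})=\overline{G}^{4}=1$ and $\gamma_{2}(\overline{G})=\overline{G}^{2}$, i.e. $\gamma_{3}(G)\le G^{4}$ and $\gamma_{2}(G)G^{4}=G^{2}$. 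Then $G/\gamma_{3}(G)$ has class $\le 2$ and its quotient by $(G/\gamma_{3}(G))^{4}$ is again $\overline{G}$; so by Corollary~\ref{Equivalence for d-maximality} I may replace $G$ by $G/\gamma_{3}(G)$ and assume $G$ has class $\le 2$. In that case $G$ is $2$-abelian, and from $\gamma_{2}(G)G^{4}=G^{2}$ together with $[G^{4},\gamma_{2}(G)]\le\gamma_{3}(G)=1$ a routine squaring-and-iterating argument (using finiteness of $G$) gives $G^{4}\le\gamma_{2}(G)^{2}\le\gamma_{2}(G)$. Finally, by Corollary~\ref{Equivalence for d-maximality}(iii) it suffices to bound $d(H)$ for proper $A$-subgroups $H\ge\gamma_{2}(G)$; for such $H$ one has $G^{4}\le\gamma_{2}(G)^{2}\le H^{2}\le\Phi(H)$ and $G^{4}\le\gamma_{2}(G)\le H$, so $d(H)=d(H/G^{4})=d(HG^{4}/G^{4})$, and $HG^{4}/G^{4}$ is a proper $A$-subgroup of $\overline{G}$ (proper since $G^{4}\le\Phi(G)$), whence $d(H)<d(\overline{G})=d(G)$.

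For (ii): first, $G$ is not metacyclic, since metacyclicity passes to quotients; it then remains to show every proper subgroup of $G$ is metacyclic, and as subgroups of metacyclic groups are metacyclic it suffices to treat maximal subgroups. By Blackburn's classification the minimal non-metacyclic $2$-groups form a short explicit list (of orders at most $2^{5}$), each of rank $3$, of exponent dividing $4$, and $d$-maximal; hence $G/G^{4}$ is $d$-maximal, $d(G)=d(G/G^{4})=3$, and by (i) (with $A$ trivial) $G$ is $d$-maximal. Corollary~\ref{Cor main 2} and Theorem~\ref{Kahn's modified 1} then give $\Phi(G)=G^{2}=\gamma_{2}(G)$, $G^{4}=\gamma_{3}(G)=\gamma_{2}(G)^{2}$, with $\gamma_{2}(G)$ powerful. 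For a maximal $M<G$ one has $M\supseteq\Phi(G)$, $M^{2}\subseteq\Phi(G)$ and $\gamma_{2}(M)\subseteq\gamma_{2}(G)=\Phi(G)$, so $\Phi(M)\subseteq\Phi(G)$; since $|M:\Phi(G)|=2^{d(G)-1}=4$ and $d(M)\le 2$, this forces $\Phi(M)=\Phi(G)=\gamma_{2}(G)$ and $d(M)=2$. Also $M/G^{4}$ is maximal in the minimal non-metacyclic group $G/G^{4}$, hence metacyclic, so $\gamma_{2}(G)$ has rank $\le 2$. If $\gamma_{2}(G)$ is cyclic (the case $|G/G^{4}|\le 2^{4}$), then $M$ is a $2$-generated $2$-group with cyclic Frattini subgroup, hence metacyclic; if $d(\gamma_{2}(G))=2$, so $|G/G^{4}|=2^{5}$, I would identify $G/G^{4}$ with the relevant Blackburn group and check directly, using $\Phi(M)=\gamma_{2}(G)$, $G^{4}=\gamma_{2}(G)^{2}$ and the powerfulness of $\gamma_{2}(G)$, that $M$ is still metacyclic. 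In every case all maximal subgroups of $G$ are metacyclic, so $G$ is minimal non-metacyclic (and then $G^{4}=1$).

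The main obstacle is the metacyclicity of the maximal subgroups in (ii). When $\gamma_{2}(G)=\Phi(M)$ is cyclic this is just the classical fact that a $2$-generated $2$-group with cyclic Frattini subgroup is metacyclic. The delicate case is $d(\gamma_{2}(G))=2$: cyclic-Frattini no longer applies, and one must combine the explicit structure of the order-$2^{5}$ minimal non-metacyclic $2$-group $G/G^{4}$ with $\Phi(M)=\gamma_{2}(G)$, $G^{4}=\gamma_{2}(G)^{2}$ and the powerfulness of $\gamma_{2}(G)$ — equivalently, one shows that this configuration forces $G^{4}=1$. This is precisely where $d$-maximality, Corollary~\ref{Cor main 2} and Theorem~\ref{Kahn's modified 1} are used in tandem. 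By comparison, the corresponding step in (i) — the inclusion $G^{4}\le\gamma_{2}(G)^{2}$ — is elementary.
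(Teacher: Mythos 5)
Your part (i) is essentially sound and runs on the same fuel as the paper's proof: apply Corollary~\ref{Cor main 2} to $\overline{G}=G/G^{4}$ to get $\gamma_{3}(G)\leq G^{4}$ and $\gamma_{2}(G)G^{4}=G^{2}$, then invoke Corollary~\ref{Equivalence for d-maximality}. (The paper proves outright that $P_{3}(G)=\gamma_{3}(G)=G^{4}$ and uses (ii)$\Rightarrow$(i) of that corollary, whereas you verify condition (iii) directly; either works.) Two remarks, though. Your assertion that a $2$-group of class $\leq 2$ is $2$-abelian is false (for $p=2$ the identity $(xy)^{2}=x^{2}y^{2}[y,x]$ shows $Q_{8}$ and $D_{8}$ are not $2$-abelian); fortunately you never really need it: from $\gamma_{2}(G)G^{4}=G^{2}$ one gets, in the abelian quotient $G/\gamma_{2}(G)$, that $(G/\gamma_{2}(G))^{2}=(G/\gamma_{2}(G))^{4}$, hence $(G/\gamma_{2}(G))^{2}=1$ by finiteness, so $G^{2}\leq\gamma_{2}(G)$ and $G^{4}\leq (G^{2})^{2}\leq\gamma_{2}(G)^{2}$. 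This also shows your reduction to class $\leq 2$ is unnecessary. With that correction, (i) is complete.

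Part (ii) is where there is a genuine gap. You reduce to showing that every maximal subgroup $M$ of $G$ is metacyclic, handle the case $\gamma_{2}(G)$ cyclic by an unproved (though true) auxiliary fact, and then, in the case $d(\gamma_{2}(G))=2$ --- i.e.\ when $G/G^{4}$ is the order-$2^{5}$ group of Blackburn's list --- you only say you ``would identify $G/G^{4}$ with the relevant Blackburn group and check directly'' that $M$ is metacyclic, equivalently that the configuration forces $G^{4}=1$. That check is exactly the crux of the statement and is not carried out; as written, (ii) is a plan, not a proof, at its hardest point. The paper closes this in one stroke by a result you did not use: Proposition~\ref{dMax 2Grp on 3-gen have class 2} says every $d$-maximal $2$-group with $d(G)=3$ has class $\leq 2$; combined with Corollary~\ref{Cor main 2} (which gives $\gamma_{3}(G)=G^{4}$ once $G$ is $d$-maximal), this yields $G^{4}=1$ immediately, so $G\cong G/G^{4}$ is minimal non-metacyclic with no subgroup-by-subgroup analysis. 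To repair your argument you must either prove the $d(\gamma_{2}(G))=2$ case in detail (which essentially amounts to reproving that proposition in this situation) or cite it; likewise the ``$2$-generated $2$-group with cyclic Frattini subgroup is metacyclic'' step deserves at least a line of justification (e.g.\ a non-metacyclic $G$ contains one of Blackburn's minimal non-metacyclic subgroups $K$, whose cyclic subgroups have order $\leq 4$, and the index count $|K:K\cap\Phi(G)|\geq 4$ forces $G=K\Phi(G)=K$, contradicting $d(G)=2$).
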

The minimal non-metacyclic $2$-groups have been classified by  Blackburn  (see \cite[Theorem 3.2]{Black}; see also Janko \cite[Theorem 7.1]{Janko} for an alternative approach). Accordingly,  every such a $2$-group $G$ satisfies $d(G)=3$, so $G$ is $d$-maximal.  Conversely, let $G$ be a $d$-maximal $2$-group with $d(G)=3$; if $H<G$, then all the subgroups of $H$ can be generated by two elements, so $H$ is metacyclic (cf. e.g., \cite[Theorem 44.5]{Berk1}), and subsequently $G$ is minimal non-metacyclic.  

\begin{theorem}\label{Class of 3-gen d-max 2-grp}[Blackburn]
If $G$ is a $d$-maximal $2$-group with $d(G)=3$, then $G$ is  one of the following:
\begin{itemize}
	\item[(a)] the elementary abelian group $C_2\times C_2\times C_2$;
	\item[(b)] the direct product $C_2\times Q_8$;
	
	\item[(c)] the central product $C_4\ast Q_8=C_4\ast D_8$ (of order $2^{4}$);
	\item[(d)] The group of order $2^{5}$ defined by
	$$\langle  a,b,c \mid a^4= b^4=[a,b]=1,  c^2= a^2, a^c=ab^2,  b^c= ba^2\rangle .$$
\end{itemize}
\end{theorem}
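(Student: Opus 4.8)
The reduction has in fact already been carried out in the paragraph preceding the statement: a $d$-maximal $2$-group $G$ with $d(G)=3$ is exactly a minimal non-metacyclic $2$-group, so the theorem is precisely Blackburn's classification \cite[Theorem 3.2]{Black} of those (equivalently Janko's \cite[Theorem 7.1]{Janko}). To keep this note self-contained, however, I would prefer to recover (a)--(d) from the machinery developed above, the only external input then being the bound $|G|\le2^{5}$. By Corollary \ref{Cor main 2} we have $\Phi(G)=\gamma_{2}(G)=G^{2}$, $\gamma_{n}(G)=G^{2^{n-1}}$ for $n\ge2$, and $\gamma_{2}(G)$ is powerful; by Corollary \ref{Equivalence for d-maximality}, $G$ is $d$-maximal if and only if $G/\gamma_{3}(G)$ is. Since every proper subgroup of $G$ is metacyclic, $\Phi(G)=\gamma_{2}(G)$ is metacyclic, so $d(\gamma_{2}(G))\le2$. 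If $\gamma_{2}(G)=1$ then $G$ is elementary abelian of rank $3$, which is case (a); so assume $G$ is non-abelian.

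Suppose first that $\gamma_{3}(G)=1$, so $G$ has class $\le2$. Then $\gamma_{2}(G)=G^{2}=\Phi(G)$ is central and elementary abelian (as $\gamma_{2}(G)^{2}=\gamma_{3}(G)=1$) of $\mathbb{F}_{2}$-dimension $1$ or $2$, and $G$ has exponent dividing $4$. I would encode $G$ by the commutator map $\beta\colon G/\Phi(G)\times G/\Phi(G)\to\gamma_{2}(G)$, which is alternating with image generating $\gamma_{2}(G)$, together with the squaring map $q\colon G/\Phi(G)\to\gamma_{2}(G)$, which satisfies $q(\bar x+\bar y)=q(\bar x)+q(\bar y)+\beta(\bar x,\bar y)$; the isomorphism type of $G$ is determined by the pair $(q,\beta)$. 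Running through such pairs on $\mathbb{F}_{2}^{3}$ up to equivalence, and discarding every $G$ that contains a subgroup needing $3$ generators — for instance $C_{2}\times D_{8}$ is excluded because $D_{8}$ contains a Klein four-group and hence $G$ contains a copy of $C_{2}^{3}$ — one is left with $C_{2}\times Q_{8}$ and $C_{4}\ast Q_{8}$ when $\dim\gamma_{2}(G)=1$, and with the group (d) when $\dim\gamma_{2}(G)=2$. Conversely, each of (a)--(d) is readily checked to be $d$-maximal by inspecting its subgroups. This part is just finite linear algebra over $\mathbb{F}_{2}$.

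The one genuinely substantive point is that the class is always $\le2$, equivalently $\gamma_{3}(G)=1$, equivalently $|G|\le2^{5}$. With the reductions above this becomes a lifting problem: $G/\gamma_{3}(G)$ must be one of the groups just found (the elementary abelian case is impossible, since $G/\gamma_{3}(G)$ is non-abelian because $\gamma_{2}(G)\neq\gamma_{3}(G)$), and one pulls back. When $\gamma_{2}(G)/\gamma_{3}(G)\cong C_{2}$ the powerful group $\gamma_{2}(G)$ is cyclic, say $\cong C_{2^{k}}$, with $k\ge2$ if $\gamma_{3}(G)\neq1$; choosing a generating triple $a,b,c$ of $G$ with $[a,b]$ a generator of $\gamma_{2}(G)$ (possible since the pairwise commutators of any generating triple generate $\gamma_{2}(G)$ modulo $\gamma_{3}(G)$), the subgroup $M=\langle a,b\rangle$ contains $\gamma_{2}(G)=\Phi(G)$ with $M/\Phi(G)$ two-dimensional, hence is maximal in $G$, and $M'=\Phi(M)=\gamma_{2}(G)\cong C_{2^{k}}$; thus $M$ is of maximal class, whence $M/\gamma_{3}(G)=M/(M')^{2}\cong D_{8}$. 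If $G/\gamma_{3}(G)\cong C_{2}\times Q_{8}$ this already gives a contradiction, since $C_{2}\times Q_{8}$ has only three involutions and hence no subgroup isomorphic to $D_{8}$; the remaining possibilities $G/\gamma_{3}(G)\cong C_{4}\ast Q_{8}$ and $G/\gamma_{3}(G)\cong(d)$ are subtler and are precisely where Blackburn's bound $|G|\le2^{5}$ does its work (one can also confirm it with the Small Groups library, cf.\ Section \ref{Last section}). I expect this bounding of $|G|$ — that is, the exclusion of classes $\ge3$ — to be the main obstacle; everything else comes down to linear algebra over $\mathbb{F}_{2}$.
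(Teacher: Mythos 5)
There is a genuine gap, and you have in fact located it yourself: the whole content of the theorem, beyond routine case analysis, is the statement that a $d$-maximal $2$-group with $d(G)=3$ has class $\leq 2$ (equivalently $\gamma_3(G)=1$, equivalently $\vert G\vert\leq 2^5$), and your proposal does not prove it — it imports it as ``Blackburn's bound'' or offers a GAP check. But the stated purpose of this theorem in the paper is precisely to give a direct proof that does \emph{not} rest on \cite[Theorem 3.2]{Black} and \cite[Theorem 44.5]{Berk1}; citing Blackburn here either makes the theorem a tautology (via the reduction to minimal non-metacyclic groups already recorded before the statement) or leaves the key step unestablished. The paper closes this gap in Proposition \ref{dMax 2Grp on 3-gen have class 2}: take a minimal counterexample, so $\vert\gamma_3(G)\vert=2$ and $\vert G\vert\in\{2^5,2^6\}$; show via Lemma \ref{Berkovich on self-centralizing groups} that every subgroup of order $8$ is abelian, which kills $\vert G\vert=2^5$; then, for $\vert G\vert=2^6$, analyse $A=\Omega_1(\gamma_2(G))$, $C=C_G(A)$, the elements of order $8$ outside $C$, and the three maximal subgroups $A,B_1,B_2$ of $\gamma_2(G)$, eventually forcing $B_1=Z(G)$ and deriving a contradiction from a $D_8$/$Q_8$ analysis in $G/B_1$. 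None of this is replaced by your lifting argument: your $D_8$-subgroup observation disposes only of the case $G/\gamma_3(G)\cong C_2\times Q_8$, while the cases $G/\gamma_3(G)\cong C_4\ast Q_8$ and $G/\gamma_3(G)$ of type (d) — which you acknowledge are ``subtler'' — are exactly where the difficulty lives, and they are left open.

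Two smaller points. First, your enumeration of the class-$2$ groups via the pair $(q,\beta)$ on $G/\Phi(G)\cong\mathbb{F}_2^3$ is only sketched (``running through such pairs \dots one is left with \dots''); it is a workable alternative to the paper's hands-on treatment of the orders $2^4$ and $2^5$ (which uses Lemma \ref{Berkovich on self-centralizing groups} for $2^4$ and an explicit choice of generators for $2^5$), but as written it is an assertion, not a verification, and you should also check $d$-maximality of (b)--(d) explicitly rather than ``readily''. Second, $d(\gamma_2(G))\leq 2$ does not need metacyclicity of $\Phi(G)$: it is immediate from $d$-maximality since $\gamma_2(G)\leq\Phi(G)<G$; invoking ``every proper subgroup is metacyclic'' quietly re-imports \cite[Theorem 44.5]{Berk1}, which the paper is trying to avoid.
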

Since \cite[Theorem 3.2]{Black} and \cite[Theorem 44.5]{Berk1} rely on several other results in the literature, it is desirable to have a more direct proof of the last theorem.  Such a proof will be given in Section \ref{Futher Results}; we shall first prove that these $2$-groups have class $\leq 2$ (Proposition \ref{dMax 2Grp on 3-gen have class 2}), so they are of order $\leq 2^5$; next we can easily deal  with each possible order (following  \cite[Theorem 7.1]{Janko}, for example).  This provides  in particular another  proof of Blackburn's \cite[Theorem 3.2]{Black}. Note also that one can  establish Proposition \ref{dMax 2Grp on 3-gen have class 2} differently, by using  cohomology as in \cite{Minh}.

In Section \ref{Futher Results}, we shall also determine the $d$-maximal $p$-groups $G$ with  $d(G)=3$, and $p>2$ (see Proposition \ref{dMax p-groups, 3 gen and p odd}).  More results concerning the case $p>2$,  will appear in a subsequent paper.  Theorems \ref{Main 1} and \ref{Kahn's modified 1},  and Corollary \ref{Coro Laffey} will be proved in Section \ref{Basic results}, and the  remaining ones  in Section \ref{By-product of being powerful}.    In the last section, we investigate  the small $d$-maximal $2$-groups (of order $\leq 2^9$)  by using GAP \cite{GAP}; we conclude with some open problems.

\section{Proof of Theorems \ref{Main 1} and \ref{Kahn's modified 1} }\label{Basic results}
First, we need some basic facts; these  are more or less  well-known (see \cite[Appendice]{Kahn SW}), although we give them proofs for the reader convenience.
\begin{lemma}\label{Basic facts}
	Let $G$ be a $p$-group acted on by a group $A$, and assume that $G$ is $d$-maximal for $A$-subgroups. 
	\begin{enumerate}
		\item If $G$ is abelian, then it is  elementary abelian.
		\item If $N$ is a normal $A$-subgroup of $G$ such that $N\leq \Phi(G)$, then $G/N$ is a $d$-maximal for $A$-subgroups.
		\item $\Phi(G)=[G,G]$.
		\item The factors $\gamma_n/\gamma_{n+1} $ of the lower central series $(\gamma_n)_{n\geq 1}$ of $G$ are elementary abelian.
		\item If $M$ is an $A$-subgroup of $G$ of index $p$, then $d(M)=r-1$, where $r=d(G)$.
		
	\end{enumerate}
\end{lemma}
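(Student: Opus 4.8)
The plan is to prove the five assertions of Lemma \ref{Basic facts} in a logical order, bootstrapping the later ones from the earlier.

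\medskip

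\noindent\textbf{Assertion (1).} Suppose $G$ is abelian. If $G$ were not elementary abelian, then $G^p \neq 1$, so $\Phi(G) = G^p \neq 1$. Every subgroup of an abelian group is an $A$-subgroup only if it is $A$-invariant, but here I want to produce a proper $A$-subgroup $H$ with $d(H) \geq d(G)$. The cleanest route: take $H = \Omega_1(G)$, which is a characteristic (hence $A$-invariant) subgroup. Writing $G$ as a direct sum of cyclic groups, $d(\Omega_1(G)) = d(G)$, and $\Omega_1(G) = G$ forces $G$ to be elementary abelian. If $G$ is not elementary abelian, $\Omega_1(G) < G$ is a proper $A$-subgroup with $d(\Omega_1(G)) = d(G)$, contradicting $d$-maximality. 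This handles (1).

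\medskip

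\noindent\textbf{Assertion (2).} Let $N \trianglelefteq G$ be an $A$-subgroup with $N \leq \Phi(G)$. First, $d(G/N) = d(G)$ since $\Phi(G/N) = \Phi(G)/N$ and $(G/N)/\Phi(G/N) \cong G/\Phi(G)$. Now let $\bar H < G/N$ be a proper $A$-subgroup; its preimage $H$ in $G$ is a proper $A$-subgroup of $G$ containing $N$, so $d(H) < d(G)$. But $N \leq \Phi(G) \cap H \leq \Phi(H)$ (since $\Phi(G) \cap H$ is contained in... — here I must be slightly careful: $N \leq \Phi(G)$ and $N \leq H$, and I need $N \leq \Phi(H)$; this follows because $N$ is a normal subgroup of $H$ contained in $\Phi(G)$, and a normal subgroup contained in the Frattini subgroup of the ambient group need not automatically lie in $\Phi(H)$ — instead I argue directly: $d(\bar H) = d(H/N) \leq d(H) < d(G) = d(G/N)$, using only that $d(H/N) \leq d(H)$ for any quotient). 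So $d(\bar H) < d(G/N)$ and $G/N$ is $d$-maximal for $A$-subgroups. This gives (2) without needing $N \leq \Phi(H)$ at all.

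\medskip

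\noindent\textbf{Assertions (3) and (4).} For (3): always $[G,G] \leq \Phi(G)$, so I must show $G^p \leq [G,G]$, i.e. $G/[G,G]$ is elementary abelian. But $G/[G,G]$ is an abelian quotient of $G$; if it is not elementary abelian, I want a contradiction with $d$-maximality of $G$. Apply the idea of (1): the preimage $H$ in $G$ of $\Omega_1(G/[G,G])$ is a characteristic (hence $A$-invariant) subgroup, and if $G/[G,G]$ is not elementary abelian then $H < G$ while $d(H) \geq d(G/[G,G]) = d(G)$ (the quotient $H/[G,G] = \Omega_1(G/[G,G])$ already needs $d(G)$ generators). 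This contradicts $d$-maximality, proving $\Phi(G) = G^p[G,G] = [G,G]$. For (4): each $\gamma_n/\gamma_{n+1}$ is abelian by definition; to see it has exponent $p$, note $\gamma_n(G)^p \gamma_{n+1}(G) = P$-type arguments, or more directly: $\gamma_n / \gamma_{n+1}$ is a central section of $G/\gamma_{n+1}$, and by (2) the quotient $G/\gamma_{n+1}$ is again $d$-maximal for $A$-subgroups (since $\gamma_{n+1} \leq \Phi(G) = \gamma_2$ for $n \geq 1$... need $\gamma_{n+1} \leq \Phi(G)$, true for $n \geq 1$). Then using (3) applied inside $G/\gamma_{n+1}$, we get $(G/\gamma_{n+1})^p \leq [G/\gamma_{n+1}, G/\gamma_{n+1}] = \gamma_2/\gamma_{n+1}$, which is not quite $\gamma_n/\gamma_{n+1}$ having exponent $p$. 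The correct argument: $\gamma_n(G)^p \leq \gamma_{n+1}(G)$ follows from the standard commutator calculus fact $\gamma_n(G)^p \gamma_{2n}(G) \leq \gamma_{n+1}(G)$ once we know $\Phi(G) = \gamma_2(G)$, via the lower $p$-central series; more cleanly, since $\Phi(G) = \gamma_2(G)$ we have $P_2(G) = \gamma_2(G)$, hence inductively $P_n(G) = \gamma_n(G)$, and $P_n(G)^p \leq P_{n+1}(G)$ gives $\gamma_n(G)^p \leq \gamma_{n+1}(G)$. So each factor is elementary abelian.

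\medskip

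\noindent\textbf{Assertion (5).} Let $M$ be an $A$-subgroup of index $p$; then $M \trianglelefteq G$ and $M \geq \Phi(G)$ (index-$p$ subgroups are maximal, hence contain the Frattini subgroup), so $M$ is normal and $A$-invariant. By $d$-maximality $d(M) \leq r - 1$. For the reverse inequality I would use a Frattini-argument / counting: $G/\Phi(G)$ is elementary abelian of rank $r$, and $M/\Phi(G)$ is a hyperplane of dimension $r-1$. Since $\Phi(M) \leq \Phi(G)$... actually the clean tool is the Burnside basis theorem together with the fact that $[M,G]M^p \leq \Phi(G)$ need not equal $\Phi(M)$, so instead I invoke a known lemma (or a short direct argument): for a maximal subgroup $M$ of a $p$-group $G$ one has $d(M) \geq d(G) - 1$. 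One way: $\Phi(G) = \Phi(M)[M, G]$... hmm. The slick argument I'd actually use: $\Phi(G)/\Phi(M)$ is a quotient of $M/\Phi(M)$ on which $G$ acts, and $[\Phi(G), G] \leq \Phi(G) \cap \gamma_2(G)$; by coprime-free Maschke-type reasoning for $p$-groups the fixed points are nontrivial, bounding things. This is the one place I expect genuine friction, so let me instead cite the standard result that $d(M) \in \{d(G)-1, d(G)\}$ for maximal $M$ and then rule out $d(M) = d(G)$ directly by $d$-maximality. Hence $d(M) = r - 1$.

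\medskip

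\noindent The main obstacle is assertion (5): while $d(M) \leq r-1$ is immediate from $d$-maximality, the lower bound $d(M) \geq r - 1$ is a general fact about maximal subgroups of $p$-groups whose cleanest proof uses either the transfer or a careful Frattini-subgroup computation; I expect to either quote it or supply the short module-theoretic argument that $\Phi(G)/\Phi(M)\Phi(G)^?$ has small dimension. Everything else reduces, via (2), to the abelian/elementary-abelian dichotomy of (1) applied in suitable quotients.
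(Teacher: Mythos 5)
Parts (1)--(3) of your proposal are correct and are essentially the paper's own argument ((3) is just the paper's reduction ``apply (2) with $N=[G,G]$, then (1)'' carried out by hand). The genuine gap is in (4). Your ``clean'' route is circular: from $\Phi(G)=\gamma_2(G)$ you get $P_2(G)=\gamma_2(G)$, but the induction step requires $P_{n+1}(G)=[\gamma_n(G),G]\,\gamma_n(G)^p=\gamma_{n+1}(G)\gamma_n(G)^p$ to equal $\gamma_{n+1}(G)$, i.e.\ exactly $\gamma_n(G)^p\le\gamma_{n+1}(G)$ --- the statement you are trying to deduce from $P_n=\gamma_n$. Likewise the ``standard commutator calculus fact'' $\gamma_n(G)^p\gamma_{2n}(G)\le\gamma_{n+1}(G)$ is, since $\gamma_{2n}(G)\le\gamma_{n+1}(G)$ automatically, just a restatement of the claim, not an input. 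The one missing ingredient is the commutator-map epimorphism the paper uses: for each $n\ge 1$ the map $x\gamma_2\otimes y\gamma_{n+1}\mapsto [x,y]\gamma_{n+2}$ defines an epimorphism $\gamma_1/\gamma_2\otimes\gamma_n/\gamma_{n+1}\twoheadrightarrow\gamma_{n+1}/\gamma_{n+2}$; since $G/\gamma_2$ is elementary abelian by (3), the left-hand side is killed by $p$, hence so is every factor $\gamma_{n+1}/\gamma_{n+2}$ (equivalently, cite the classical fact that $\exp(\gamma_n/\gamma_{n+1})$ divides $\exp(G/\gamma_2)$). You need to supply this or an equivalent argument.

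In (5) your conclusion is right, but you do not actually prove the lower bound, and the statement you propose to quote is false as stated: $d(M)\le d(G)$ can fail for a maximal subgroup (take the elementary abelian base group of $C_p\wr C_p$, $p$ odd), so ``$d(M)\in\{d(G)-1,d(G)\}$'' is not a citable theorem. Fortunately only the half you use is needed, and it is a two-line computation with no transfer or module theory: $\Phi(M)=M^p[M,M]\le G^p[G,G]=\Phi(G)\le M$, so $p^{d(M)}=\vert M:\Phi(M)\vert\ge\vert M:\Phi(G)\vert=p^{r-1}$; combined with $d(M)<r$ from $d$-maximality this gives $d(M)=r-1$ (and $\Phi(M)=\Phi(G)$), which is precisely the paper's argument phrased as $\vert\Phi(G):\Phi(M)\vert<p$.
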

\begin{proof}
	For (1), observe that $G_1=\{x\in G\mid x^{p}=1\}$ is an $A$-subgroup, and $d(G)=d(G_1)$; hence  $G=G_1$.  For (2), observe  that $d(H/N)\leq d(H)$ for every $A$-subgroup $H\leq G$ containing $N$, and that $d(G/N)=d(G)$ since $\Phi(G/N)=\Phi(G)/N$.  As  $[G,G]$ is $A$-invariant, (3) follows at once from (2) and (1).   Next, note that for every integer $n\geq 1$, we have  a canonical epimorphism $x \gamma_2\otimes y\gamma_n \mapsto [x,y]\gamma_{n+1}$, from $\gamma_1/\gamma_2 \otimes \gamma_n/ \gamma_{n+1}$ to  $\gamma_{n+1}/ \gamma_{n+2} $. Now, (4)  follows at once  from (3) and  induction on $n$.  For (5), observe  that  $\vert\Phi(G):\Phi(M)\vert <\vert G:M\vert=p$, so $\Phi(M)=\Phi(G)$; the result follows.
\end{proof}
\begin{proof}[Proof of Theorem \ref{Main 1}]

Assume  $G$ is a counterexample of  minimal order; in particular $\gamma_3(G)\neq 1$.  By viewing $\gamma_3(G)$   as a normal subgroup of  $\Gamma=G \rtimes A$, we obtain $[\gamma_3(G),\Gamma]<\gamma_3(G)$;  choose therefore a maximal subgroup $N\leq \gamma_3(G)$ which contains $[\gamma_3(G),\Gamma]$. Hence, $N$ is a normal $A$-subgroup of $G$,  $G/N$ is $d$-maximal by  Lemma \ref{Basic facts}(2),  and plainly $\gamma_3(G/N)\neq 1$.  The minimality of $G$ implies $N=1$,   so $\vert \gamma_3(G)\vert =p$.  

Let $Z=\gamma_2(G)\cap Z(G)$,  and  consider the map $g\mapsto g^{*}$, from $G$ to $\mathrm{Hom}(\gamma_2(G)/Z,\gamma_3(G))$, defined by $g^{*}(tZ)=[g,t]$ for all $t\in \gamma_2(G)$. The latter  is readily seen to be a group homomorphism  whose kernel is equal to  $C=C_G(\gamma_2(G))$.  Now, $G/C$ embeds in $\mathrm{Hom}(\gamma_2(G)/Z,\gamma_3(G))$, and the latter is isomorphic to $ \gamma_2(G)/Z$ (since $ \gamma_2(G)/Z$ is elementary abelian); thus $\vert G:C\vert \leq \vert \gamma_2(G):Z\vert$;  equivalently $p^{d(G)}\leq \vert C:Z\vert$.  We claim that $\Phi(C)\leq Z$. Once this is proved, it follows that $d(G)\leq d(C)$, which is absurd   since $C$ is  $A$-invariant and $C<G$ (as  $G$ has class $3$).

For the last claim,  observe that $[C,G,C]=[G,C,C]=1$, so by the three subgroups lemma, $[C,C,G]=1$,  that is  $[C,C]\leq Z$.  Hence, we have only to show that $C^{p}\leq Z$. Let $c\in C$ and $g\in G$.  As $G$ has class $3$, one easily sees  by induction on $n$  that  
$$[c,g^{n}]=[c,g]^{n}[c,g,g]^{\binom{n}{2}}.$$
For $n=p$,  we have   $[c,g^{p}]=1$ as $G^{p}\leq \gamma_2(G)$, and $[c,g,g]^{\binom{p}{2}}=1$ as $\vert \gamma_3(G)\vert =p$;  hence $[c,g]^{p}=1$.  But  $[c^{p},g]=[c,g]^{p}$ because $c$ commutes with $\gamma_2(G)$.  Thus $C^{p}\leq Z$, as desired.

\end{proof}

Next, Corollary \ref{Coro Laffey} 
is immediate  by Theorem \ref{Main 1},  Lemma \ref{Basic facts}(3), and the following.
\begin{lemma}\label{Second key} Let $G$ be a  $p$-group,  $A$ be a $p$-group acting on $G$, and  
	$$r=\max\{d(H)\mid H \mbox{ is an $A$-subgroup of $G$}\}.$$
	Then, for every  integer $0\leq i <r$,   there exists an $A$-subgroup $K_i\leq G$ such that  $d(K_i)= r-i$, and $K_i$  is $d$-maximal for $A$-subgroups.
\end{lemma}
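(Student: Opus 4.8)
The plan is to prove this by descending induction on $i$, or more precisely by extracting nested $d$-maximal $A$-subgroups one step at a time. The base case $i=0$ is essentially the content of fact $(\mathrm{F})$ in the introduction: take $K_0$ to be a minimal element (under inclusion) of the set $\mathcal{S}$ of $A$-subgroups $S \leq G$ with $d(S) \geq r$; since $r$ is the maximum of $d(H)$ over all $A$-subgroups, in fact $d(K_0) = r$, and any proper $A$-subgroup $H < K_0$ has $d(H) < r = d(K_0)$, so $K_0$ is $d$-maximal for $A$-subgroups. Thus the real work is to show: if $K$ is an $A$-subgroup that is $d$-maximal for $A$-subgroups with $d(K) = m \geq 2$, then there is an $A$-subgroup $L < K$ which is $d$-maximal for $A$-subgroups with $d(L) = m-1$. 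Iterating this starting from $K_0$ produces $K_1, K_2, \dots, K_{r-1}$ with the required generator counts.

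So fix such a $K$ with $d(K) = m$. The first step is to find an $A$-subgroup of $K$ of index $p$; this is where we use that $A$ is a $p$-group. The group $A$ (indeed $\Gamma = K \rtimes A$) acts on the elementary abelian group $V = K/\Phi(K) \cong \mathbb{F}_p^{m}$, with $\Gamma$ acting through the $p$-group $\Gamma/\Phi(K) \cdot C$ — in any case through a $p$-group, since $A$ and $K/\Phi(K)$ are both $p$-groups. A $p$-group acting on a nonzero $\mathbb{F}_p$-vector space has a nonzero fixed vector, hence, dualizing, a codimension-one $\Gamma$-invariant subspace $W < V$. Its preimage $M$ in $K$ is then an $A$-subgroup of index $p$ in $K$ containing $\Phi(K)$. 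By Lemma \ref{Basic facts}(5), $d(M) = m - 1$.

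Now $M$ need not itself be $d$-maximal for $A$-subgroups, so the second step is to pass to a minimal "witness" inside $M$: let $L$ be a minimal element of the set of $A$-subgroups $S \leq M$ with $d(S) \geq m-1$. As in the base case, $d(L) = m-1$ exactly (it cannot exceed $m-1$ because $M$ itself could fail to be the max, but note $d(S) \le d(M)=m-1$ is not automatic for $S \le M$ — however $d(L) \ge m-1$ by construction and $d(L) \le r$; we then observe $d(L)$ cannot be $\ge m$, for then $L$ would be an $A$-subgroup of the $d$-maximal group $K$ with $L \le M < K$ proper and $d(L) \ge m = d(K)$, contradicting $d$-maximality of $K$; hence $d(L) = m-1$), and every proper $A$-subgroup of $L$ has $d < m-1$ by minimality, so $L$ is $d$-maximal for $A$-subgroups. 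Setting $K_{i+1} = L$ when $K = K_i$ completes the inductive step.

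The one genuinely delicate point is the index-$p$ $A$-subgroup: it is \emph{not} true for a general acting group $A$ that a $p$-group has an $A$-invariant maximal subgroup, and this is precisely where the hypothesis that $A$ is a $p$-group is essential (it is also the hypothesis that makes $(\mathrm{F})$ strengthenable, as the footnote promises). Everything else is bookkeeping with Lemma \ref{Basic facts}(5) and the minimality argument already used for $(\mathrm{F})$. I would present the two minimality arguments (base case and inductive step) uniformly, perhaps as a small sublemma: in any $p$-group $G$ with $A$-action, for each integer $m$ with $0 \le m \le \max\{d(H)\}$ there is a $d$-maximal $A$-subgroup of generator rank $\ge m$; combined with the index-$p$ reduction this pins the rank down to exactly $m$.
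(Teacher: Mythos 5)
Your proposal is correct and follows essentially the same route as the paper: obtain $K_0$ by the minimality argument of $(\mathrm{F})$, pass to an $A$-invariant maximal subgroup of $K_i$ (the paper takes a maximal subgroup containing $[K_i,A]$, you take the preimage of an invariant hyperplane of $K_i/\Phi(K_i)$ -- the same use of the hypothesis that $A$ is a $p$-group), apply Lemma \ref{Basic facts}(5) to get rank $m-1$, and then reapply the $(\mathrm{F})$-type minimality inside that subgroup, using $d$-maximality of $K_i$ to cap the rank at $m-1$. No gaps; the argument matches the paper's proof step for step.
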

\begin{proof}
	Let $H_0$  be an $A$-subgroup of $G$ such that $d(H_0)=r$. By ($\mathrm{F}$),  there exists a $d$-maximal $A$-subgroup $K_0\leq H_0$ such that    $d(K_0)\geq d(H_0)$. Hence, $d(K_0)=r$ as $H_0$ has the maximal possible rank.   Since $A$ is a $p$-group, we have $[K_0,A]<K_0$; consider therefore a maximal subgroup  $H_1$ of $K_0$ that contains  $[K_0,A]$, so $H_1$ is $A$-invariant, and $d(H_1)=r-1$  by Lemma \ref{Basic facts}(5).  Again,  by ($\mathrm{F}$),  there exists a $d$-maximal $A$-subgroup $K_1\leq H_1$ such that $d(K_1)\geq d(H_1)$; but $d(K_1)<r$ since $K_1<K_0$, hence  $d(K_1)=r-1$.  Applying the previous argument to $K_1$, and so forth, yields the desired result.  
\end{proof}

\begin{proof}[Proof of Theorem \ref{Kahn's modified 1}]

We have $\gamma_2(G)^{2}\leq  \gamma_3(G)$ by Lemma \ref{Basic facts}(4).  If the quotient   $\gamma_3(G)/ \gamma_2(G)^{2}$ is  non-trivial, then  $[\gamma_3(G),\Gamma]\gamma_2(G)^{2} <\gamma_3(G)$, where $\Gamma=G\rtimes A$ acts on $\gamma_3(G)/ \gamma_2(G)^{2}$ in the obvious way.  Choose a maximal subgroup $N$ of $\gamma_3(G)$   which contains $[\gamma_3(G),\Gamma]\gamma_2(G)^{2}$.  So, $N$ is  $\Gamma$-invariant,   $G/N$ is   $d$-maximal for $A$-subgroups (by Lemma \ref{Basic facts}(2)) and $G/N$ does not satisfy our claim. Thus we may  assume  $N=1$,  so $\gamma_2(G)^{2}=1$ and $\vert\gamma_3(G)\vert=2$.

Let  $Z=\gamma_2(G)\cap Z(G)$, $C=C_G(\gamma_2(G))$, and   consider the map $g\mapsto g^{*}$ from $G$ to $\mathrm{Hom}(\gamma_2(G)/Z,\gamma_3(G))$ as in the proof of Theorem \ref{Main 1}. It follows likewise that $2^{d(G)}=\vert G:\gamma_2(G)\vert \leq \vert C:Z\vert$.   Obviously, $C^{2}\leq \gamma_2(G)$; and for $c\in C$ and $g\in G$, we have  $[c^{2},g]=[c,g]^{2}=1$, so $C^{2}\leq Z$.  Now, $C$ is a proper  $A$-subgroup of $G$ which satisfies $d(G)\leq d(C)$, a contradiction.  Thus $\gamma_3(G)/ \gamma_2(G)^{2}=1$; the result follows. 
\end{proof}

\section{The result of being powerful}\label{By-product of being powerful}

Suppose $X$ and $Y$ are two normal subgroups of  $G$; it is well-known that  $$X\leq YX^{p}[X,G] \implies X\leq Y.$$ (Otherwise, one can find a maximal $G$-invariant subgroup $W\leq X$ that contains $X\cap Y$, but such a $W$ also contains $X^{p}$ and $[X,G]$, a contradiction.)  The point here is that in order to establish $X\leq Y$, one can suppose that $X^{p}=[X,G]=1$ (in other words, we  prove our claim in $G/X^{p}[X,G]$, and then deduce it for $G$ by the above implication).  This trick has been used in proving Theorem \ref{Kahn's almost powerfuly embed}; we shall use it to prove the next result as well.

\begin{lemma}\label{Generating almost p.e. subgroups}[cf. \cite[Corollary 1 ]{Kahn APE}]
	Let $G$ be a $2$-group, and $N,M\leq G$. 
	\begin{itemize}
		\item[(i)]  If $N$ is almost powerfully embedded in $G$, then so are $N^{2}$, $[N,G]$.
		\item[(ii)]  If $N,M\leq G^{2}$, and both of them are almost powerfully embedded in $G$, then so are $NM$ and $[N,M]$.
	\end{itemize}

\end{lemma}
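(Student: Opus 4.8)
I would derive all four conclusions from Theorem~\ref{Kahn's almost powerfuly embed}. Let $P$ denote whichever of $N^{2}$, $[N,G]$, $NM$, $[N,M]$ is at hand. In every case $P\leq G^{2}$: indeed $K\leq G$ forces $K^{2}\leq G^{2}$, so $N^{2},M^{2}\leq G^{2}$, whence $[N,G]\leq N^{2}\leq G^{2}$, $NM\leq G^{2}$ (as $G^{2}$ is a subgroup) and $[N,M]\leq[N,G]\leq N^{2}\leq G^{2}$. Moreover, being almost powerfully embedded forces $[N,G]\leq N^{2}\leq N$, so $N$ and $M$ are normal in $G$ and all the commutator subgroups below are normal in $G$. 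Thus, by Theorem~\ref{Kahn's almost powerfuly embed}, it suffices in each case to establish the single inclusion $[P,G]\leq P^{2}$.

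Two of these are immediate. For $P=NM$ one has $[NM,G]=[N,G][M,G]\leq N^{2}M^{2}\leq(NM)^{2}$. For $P=N^{2}$ I would use the reduction recalled at the beginning of this section: to prove $[N^{2},G]\leq N^{4}$ we may replace $G$ by $G/[N^{2},G]^{2}[N^{2},G,G]$, so that $[N^{2},G]$ is central of exponent $2$; then $[a^{2},g]=[a,g]^{2}[a,g,a]$ with $[a,g]\in[N,G]\leq N^{2}$ (hence $[a,g]^{2}\in N^{4}$) and $[a,g,a]\in[N^{2},N]\leq[N,N]\leq N^{4}$ by powerfulness of $N$, so $[a^{2},g]\in N^{4}$ for all $a\in N$, $g\in G$; since $N^{2}$ is generated by the squares of elements of $N$, this gives $[N^{2},G]\leq N^{4}=(N^{2})^{2}$.

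The real content is in $P=[N,G]$ and $P=[N,M]$. Using the same reduction I would assume $[P,G]$ is central of exponent $2$. The first step is to record the auxiliary fact $[N^{2},P]=1$ (and, for $P=[N,M]$, also $[M^{2},P]=1$): since $[N,P]\leq[P,G]$ is central of exponent $2$, the identity $[a^{2},y]=[a,y]^{2}[a,y,a]$ gives $[a^{2},y]=1$, and the usual homomorphism argument on $N^{2}$ does the rest; as $P\leq N^{2}$, this also shows $P$ is abelian. A short commutator chain inside the reduced group then yields $[P,G]\leq P^{2}$. For $P=[N,M]$, the three subgroups lemma, together with $[N,G]\leq N^{2}$ and $[M,G]\leq M^{2}$, reduces this to $[N^{2},M]\leq[N,M]^{2}$ and its symmetric counterpart; expanding $[a^{2},m]=[a,m]^{2}[a,m,a]$, the first factor already lies in $[N,M]^{2}$, and writing $[a,m]=d^{4}$ with $d\in M$ — legitimate because $[N,M]\leq M^{4}$, which follows from $N\leq G^{2}$ and Theorem~\ref{Kahn's almost powerfuly embed} applied to $M$, giving $[N,M]\leq[M,G^{2}]\leq M^{4}$ — the remaining term collapses to $[d^{2},a]^{2}\in[N,M]^{2}$, the leftover bracket $[[d^{2},a],d^{2}]$ dying because $[M^{2},[N,M]]=1$. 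For $P=[N,G]$ the three subgroups lemma is vacuous, but the analogous chain still closes in a bounded number of steps: repeatedly replacing an element of $[N,G]\leq N^{2}$ by a square root in $N$ and peeling off an explicit square lying in $[N,G]^{2}$, after at most three steps one reaches a commutator in $[N,N]\leq N^{4}$, which one writes as a fourth power and which then collapses using $[N^{2},[N,G]]=1$. In both cases $[P,G]$ is generated by commutators of the displayed form, so $[P,G]\leq P^{2}$, and Theorem~\ref{Kahn's almost powerfuly embed} upgrades this to: $P$ is almost powerfully embedded in $G$.

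The step I expect to be the main obstacle is precisely this commutator bookkeeping. For $P=[N,M]$ one must, at each stage, write the current element of $[N,M]$ as a $2$-power of an element of \emph{whichever} of $N$, $M$ keeps the next commutator inside $[N,M]$ (so that its square lands in $[N,M]^{2}$) rather than merely inside $[N,N]$ or $[M,M]$; the containment $[N,M]\leq N^{4}\cap M^{4}$, valid because $N,M\leq G^{2}$, is what makes the chain terminate quickly. For $P=[N,G]$ one has no ambient $G^{2}$ and must instead descend into $[N,N]\leq N^{4}$, using that a powerful $2$-group satisfies $N^{4}=\{x^{4}:x\in N\}$. Once one sees to route everything through Theorem~\ref{Kahn's almost powerfuly embed} and to prove $[N^{2},P]=[M^{2},P]=1$ first, the remaining computations are routine.
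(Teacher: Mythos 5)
Your proposal is correct, and its skeleton is the same as the paper's: route every case through Theorem~\ref{Kahn's almost powerfuly embed} so that only the single inclusion $[P,G]\leq P^{2}$ needs checking, use the reduction recalled at the start of Section~\ref{By-product of being powerful} to make $[P,G]$ central of exponent $2$, expand with $[a^{2},g]=[a,g]^{2}[a,g,a]$, dispose of $NM$ trivially, and reduce $[N,M]$ via the three subgroups lemma to $[N^{2},M]\leq[N,M]^{2}$ and its symmetric counterpart. Where you genuinely diverge is in how the residual commutators are killed: you first establish $[N^{2},P]=[M^{2},P]=1$ and then rewrite elements of $[N,G]\leq N^{2}$, $[N,N]\leq N^{4}$ and $[N,M]\leq M^{4}$ as honest squares and fourth powers, which relies on the Lubotzky--Mann facts $N^{2}=\{x^{2}:x\in N\}$ and $N^{4}=\{x^{4}:x\in N\}$ for powerful $2$-groups (legitimate, since powerfulness passes to the quotient used in the reduction); your inclusion $[N,M]\leq[M,G^{2}]\leq (M^{2})^{2}$, obtained by applying Theorem~\ref{Kahn's almost powerfuly embed} to $M$, is a nice touch, and your three-step chain for $P=[N,G]$ does close exactly as sketched ($[u,g]=[a,g]^{2}[b,a]^{2}[c^{2},b]^{2}$ with $u=a^{2}$, $[a,g]=b^{2}$, $[b,a]=c^{4}$, the last bracket dying by $[N^{2},P]=1$). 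The paper instead stays entirely at the subgroup level: for $[N,M]$ it uses $[N,M,N]\leq[N,M,G^{2}]\leq[N,M]^{2}$ (so only the centrality half of the reduction is needed there), and for $[N,G]$ it bounds $[N^{2},N]$ through $\gamma_{2}(N)^{2}\gamma_{3}(N)$ and $\gamma_{3}(N)\leq[N^{4},N]$, then concludes from $[N,G,G]\leq[N^{2},G]\leq[N,G]^{2}$. What each buys: the paper's computation is shorter and self-contained, invoking no element-wise structure of powerful groups, while yours is more uniform across the cases and makes explicit where the hypothesis $N,M\leq G^{2}$ enters (via $[N,M]\leq N^{4}\cap M^{4}$), at the price of importing powerful-group machinery and heavier bookkeeping in case (i).
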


\begin{proof}
	(i) Let $X=[N,G] $ or $N^2$.  We have $X\leq G^{2}$, so  by Theorem \ref{Kahn's almost powerfuly embed}, we have only to show that $[X,G]\leq X^{2}$.
	
	For $X=[N,G]$, we may assume  $[N,G,G,G]=1$.   It follows that $[N^{2},G]$ lies in $[N,G]^{2}$. Indeed, for  $a\in N$ and $g\in G$, we have
	$$[a^{2},g]=[a,g][a,g,a][a,g] =[a,g]^{2}[a,g,a].$$
	Thus, $[N^{2},G] \leq [N,G]^{2}[N^{2},N]$. Assuming $g\in N$, it follows from  the above identity that $[N^{2},N]\leq \gamma_2(N)^{2}\gamma_3(N)$.  Next, observe that $\gamma_3(N)\leq [N^{4},N]$; so  replacing $a$ by $a^{2}$  yields $[a^{4},g]=[a^{2},g]^{2}[a^{2},g,a^{2}]$, but $[a^{2},g,a^{2}]=[a^{2},g,a]^{2}$.   Now, we have $\gamma_3(N)\leq \gamma_2(N)^{2}\leq [N,G]^{2}$.  Thus, $[N,G,G]\leq [N^{2},G]\leq[N,G]^{2}$, as claimed.  
	
	For $X=N^{2}$, we may assume   $[N^{2},G,G]=1$, and in particular $[N,G,G,G]=1$;   therefore $[N^{2},G]\leq [N,G]^{2}\leq (N^{2})^{2}$.
	
	(ii) Similarly, by Theorem \ref{Kahn's almost powerfuly embed},  we need only to show that $[NM,G]\leq (NM)^{2}$, and $[N,M,G]\leq [N,M]^{2}$.  The first case is trivial. For the second, we may assume $[N,M,G,G]=1$.  We claim that $[N^{2},M]\leq [N,M]^{2}$. Once this is proved, it follows by symmetry that  $[M^{2},N]\leq [N,M]^{2}$; so both of $[N,G,M]$ and $[M,G,N]$  lie in $[N,M]^{2}$,  and by the three-subgroups lemma, the same is true for $[N,M,G]$.  To prove that claim, let $a\in N$,  $b\in M$, and $g\in G$. We have
	$[a^{2},b]=[a,b]^{2}[a,b,a]$, and $[a,b,g^{2}]=[a,b,g]^{2}$; so $$[N^{2},M]\leq [N,M]^{2} [N,M,N]\leq [N,M]^{2} [N,M,G^{2}]\leq [N,M]^{2}.$$
\end{proof}
Note that (i) was proved in \cite[Corollary 1 ]{Kahn APE} under the assumption $N\leq G^{2}$. For (ii), the assumption  $N,M\leq G^{2}$ is necessary for $NM$ to satisfy the claim (consider for instance  $G=Q_8$, and $N,M$ two  distinct maximal subgroups of $Q_8$); we don't know if that condition is also  necessary for $[N,M]$.

\begin{proof}[Proof of Corollary \ref{Cor main 2}]
We proceed by induction on $n$.  For $n=2$, the result is immediate by Theorems \ref{Kahn's modified 1} and \ref{Kahn's almost powerfuly embed}, and Lemma \ref{Basic facts}(3).  Next, assume  that  $P_{n}(G)=\gamma_n(G)$ is almost powerfully embedded in $G$, for some  $n\geq 2$.  It follows that $P_{n+1}(G)=P_n(G)^2$, and we have $P_n(G)^2=\gamma_n(G)^{2}\leq \gamma_{n+1}(G)$ by lemma \ref{Basic facts}(4);  so $P_{n+1}(G)\leq \gamma_{n+1}(G)$.  Conversely, 
$$ \gamma_{n+1}(G)=[P_{n}(G),G]\leq P_{n}(G)^{2}=P_{n+1}(G),$$ 
so, $P_{n+1}(G)= \gamma_{n+1}(G)$.  Moreover, $\gamma_{n+1}(G)$ is almost powerfully embedded in $G$ by Lemma \ref{Generating almost p.e. subgroups}(i).  This shows that $P_{n}(G)=\gamma_n(G)$ is almost powerfully embedded in $G$, for all  $n\geq 2$.  

Next, assume that $G^{2^{n-1}}=P_n(G)$ holds for some $n\geq 2$. Clearly,  we have $G^{2^{n}}\leq (G^{2^{n-1}})^2 \leq P_{n+1}(G)$.   Conversely, since
$2n\geq n+2$, we have
$$[P_n(G), P_n(G)] \leq P_{2n}(G) \leq P_{n+2}(G)= P_{n+1}(G)^2.$$
Thus, $P_{n}(G)/P_{n+1}(G)^2$ is abelian.  Now, let $x\in P_{n}(G)=G^{2^{n-1}}$, so there exist $x_1,\ldots,x_s\in G$ such that $x=x_{1}^{2^{n-1}}\ldots x_{s}^{2^{n-1}}$.  It follows that
$$x^{2}=x_{1}^{2^{n}}\ldots x_{s}^{2^{n}} \mod  P_{n+1}(G)^2;$$
so $P_{n}(G)^2=G^{2^{n}} P_{n+1}(G)^2$, that is $P_{n+1}(G)=G^{2^{n}} \Phi(P_{n+1}(G))$. Thus  $P_{n+1}(G)=G^{2^{n}}$.   This shows that $G^{2^{n-1}}=P_n(G)$ for all $n\geq 2$, and
completes the proof. 
\end{proof}

In \cite[Theorem 2]{Kahn APE}, Kahn  claimed that the equality $P_n(G)=G^{2^{n-1}}$ follows from \cite[Theorem 4.1.3]{MannLubotzky} ; the latter seems to yield only $P_n(G)=(G^{2})^{2^{n-2}}$.

\medskip

Next, we shall prove Corollary \ref{Equivalence for d-maximality}.  We need the following result (which we shall use  as a substitute for  \cite[Proposition 1]{Kahn APE}).
\begin{proposition}\label{Key for generator's number}
	Let $G$ be a  $2$-group, and $H, K \leq G$  such that $[K,H]\leq K^{2}$. Then
	$$d(HK)\geq d(H)+d(K)-d(H\cap K);$$
	and the equality holds if and only if $(H\cap K)^{2}=H^2\cap K^2$.
\end{proposition}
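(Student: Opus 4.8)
The plan is to work inside the elementary abelian $\mathbb{F}_2$-vector spaces obtained by passing to Frattini quotients, and to translate the inequality into a statement about dimensions of subspaces. Write $\overline{X}=X\Phi(G)/\Phi(G)$ for the image of a subgroup $X$ in $V=G/\Phi(G)$. The key observation is that the hypothesis $[K,H]\le K^2$ forces $H$ to \emph{normalize} $K$ (indeed $[K,H]\le K^2\le\Phi(K)\le\Phi(HK)$, so $K$ is normal in $HK$), and more importantly it gives good control over $\Phi(HK)$: one shows $\Phi(HK)=\Phi(H)\Phi(K)[H,K]=\Phi(H)\Phi(K)$, using $[H,K]\le K^2\le\Phi(K)$ and the standard formula $\Phi(HK)=(HK)^2[HK,HK]$ expanded via $(hk)^2\equiv h^2k^2\,[k,h]\pmod{\text{higher}}$. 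Consequently $d(HK)=\dim_{\mathbb{F}_2}\overline{H}+\overline{K}$ inside $V$ — that is, $HK/\Phi(HK)\cong \overline{H}+\overline{K}$ as $\mathbb{F}_2$-spaces, where $\overline H=H/\Phi(H)$ mapped in and similarly for $K$. Then by the dimension formula for subspaces,
\[
d(HK)=\dim(\overline H+\overline K)=\dim\overline H+\dim\overline K-\dim(\overline H\cap\overline K).
\]
Since $\dim\overline H=d(H)$, $\dim\overline K=d(K)$, and $\overline{H\cap K}\le \overline H\cap\overline K$, the inequality $d(HK)\ge d(H)+d(K)-d(H\cap K)$ follows at once, with equality precisely when $\overline{H\cap K}=\overline H\cap\overline K$.

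So the whole argument reduces to identifying when $\overline{H\cap K}=\overline H\cap\overline K$ as subspaces of $V$, i.e.\ when
\[
(H\cap K)\Phi(G)=H\Phi(G)\cap K\Phi(G).
\]
Here I would use the hypothesis again, now to pin down $\Phi(G)\cap H$ and $\Phi(G)\cap K$, or rather to compare $H\cap K\Phi(G)$ with $(H\cap K)(H\cap\Phi(G))$. The containment $\overline{H\cap K}\le\overline H\cap\overline K$ is automatic; for the reverse, take $x\in H$ with $x\equiv y\pmod{\Phi(G)}$ for some $y\in K$; one wants $x\in(H\cap K)\Phi(G)$. This is where $(H\cap K)^2=H^2\cap K^2$ should enter: intuitively, $\Phi(G)$ is generated by squares and commutators, and the relevant obstruction to "lifting'' an element of $\overline H\cap\overline K$ into $H\cap K$ lives in $(H^2\cap K^2)/(H\cap K)^2$. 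I would make this precise by the standard minimal-counterexample / reduction trick highlighted in the paper: since the statement only concerns the images in the Frattini quotient, one may factor out a suitable normal subgroup to arrange that $G$ (or at least the relevant squares and commutators) has exponent dividing $2$ and class small enough that $H^2=K^2=1$ reduces the equality condition to $(H\cap K)^2=1$, making the comparison transparent; then the implication is read off directly and lifted back by the implication $X\le YX^2[X,G]\implies X\le Y$ recorded at the start of Section~\ref{By-product of being powerful}.

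The main obstacle I expect is not the inequality — that is essentially linear algebra once $\Phi(HK)=\Phi(H)\Phi(K)$ is established — but the exact equality criterion, specifically showing that $(H\cap K)^2=H^2\cap K^2$ is both necessary and sufficient for $\overline{H\cap K}=\overline H\cap\overline K$. The subtlety is that $\Phi(G)$ mixes squares with commutators, so one must argue that, under $[K,H]\le K^2$, the commutator part of $\Phi(G)\cap H$ relevant to the lifting already lies in $H\cap K$, leaving only the square part $H^2\cap K^2$ as the genuine obstruction; getting this clean will require the reduction to the case where higher commutators vanish, exactly as in the proof of Theorem~\ref{Kahn's almost powerfuly embed}. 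Once that normalization is in place, a short computation with the identity $(hk)^2=h^2k^2[k,h][k,h,h]\cdots$ finishes both directions.
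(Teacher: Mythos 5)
Your opening step is correct and coincides with the paper's: since $[K,H]\le K^{2}\le K$, $H$ normalizes $K$, so $HK\le G$, and because $\Phi(X)=X^{2}$ for a $2$-group one gets $(HK)^{2}=H^{2}K^{2}$, i.e. $\Phi(HK)=\Phi(H)\Phi(K)$. The gap comes immediately afterwards: the identifications $\dim\overline H=d(H)$ and $\dim\overline K=d(K)$ are false in general, whether the bars denote images in $G/\Phi(G)$ or in $HK/\Phi(HK)$, because $H\cap\Phi(HK)=H^{2}(H\cap K^{2})$ can be strictly larger than $\Phi(H)=H^{2}$ (already $H=\langle a^{2}\rangle\le K=\langle a\rangle\cong C_{4}$ gives $\dim\overline H=0<1=d(H)$). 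Even your first equality $d(HK)=\dim(\overline H+\overline K)$ fails with bars in $G/\Phi(G)$: take $G=\langle a\rangle\times\langle b\rangle\cong C_{4}\times C_{4}$, $H=\langle a\rangle$, $K=\langle ab^{2}\rangle$; then $[K,H]=1\le K^{2}$ and $d(HK)=2$, but $\overline H=\overline K$ is one-dimensional, so your formula yields $1$. This example is precisely one where the inequality is strict ($(H\cap K)^{2}=1<\langle a^{2}\rangle=H^{2}\cap K^{2}$): the quantity your linear-algebra picture discards is exactly the defect $\vert H^{2}\cap K^{2}:(H\cap K)^{2}\vert$ that the proposition is about. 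The proposed rescue for the equality criterion also does not work: the implication $X\le YX^{2}[X,G]\Rightarrow X\le Y$ is a device for proving containments between normal subgroups, and passing to a quotient that forces $H^{2}=K^{2}=1$ changes $d(H)$, $d(K)$, $d(H\cap K)$ and $d(HK)$ uncontrollably, so neither the inequality nor the equality condition is preserved by that reduction.

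The statement in fact follows from your first step by plain order counting, which is what the paper does: since $(HK)^{2}=H^{2}K^{2}$,
$$2^{d(HK)}=\frac{\vert HK\vert}{\vert H^{2}K^{2}\vert}=\frac{\vert H\vert\,\vert K\vert}{\vert H\cap K\vert}\cdot\frac{\vert H^{2}\cap K^{2}\vert}{\vert H^{2}\vert\,\vert K^{2}\vert}=2^{d(H)+d(K)}\cdot\frac{\vert H^{2}\cap K^{2}\vert}{\vert H\cap K\vert},$$
and since $(H\cap K)^{2}\le H^{2}\cap K^{2}$ we have $\vert H^{2}\cap K^{2}\vert/\vert H\cap K\vert\ \ge\ \vert(H\cap K)^{2}\vert/\vert H\cap K\vert=2^{-d(H\cap K)}$, with equality exactly when $(H\cap K)^{2}=H^{2}\cap K^{2}$. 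No passage to Frattini quotients of the ambient group, and no reduction modulo normal subgroups, is needed.
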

Indeed,   the assumption $[K,H]\leq K^{2}$ implies that $HK\leq G$ and $(HK)^{2}=H^{2}K^{2}$. Hence,
$$\vert HK: (HK)^{2}\vert =\frac{\vert H \vert \cdot  \vert K \vert}{\vert H \cap K \vert} \frac{\vert H^{2}\cap K^{2} \vert }{\vert H^{2} \vert \cdot\vert K^{2} \vert }, $$
therefore,
$$2^{d(HK)}=2^{d(H)+d(K)}\frac{\vert H^{2}\cap K^{2} \vert}{\vert H\cap K \vert}.$$
As $(H\cap K)^{2}\leq H^{2}\cap K^{2} $, we have $\frac{\vert H^{2}\cap K^{2} \vert}{\vert H\cap K \vert}\geq 2^{-d(H\cap K)}  $; the proposition is immediate now. 
\begin{proof}[Proof of Corollary \ref{Equivalence for d-maximality}]
The implication $(i)\Rightarrow (ii)$ is immediate by Lemma \ref{Basic facts}(2).   Assume $(ii)$, and let $H<G$ be an $A$-subgroup containing $\gamma_2(G)$.  Obviously, $\gamma_2(G)^{2}\leq H^{2}$,  that is $\gamma_3(G)\leq H^{2}$ (by Theorem \ref{Kahn's modified 1}).  Therefore $$d(H)=d(H/\gamma_3(G))<d(G/\gamma_3(G))=d(G),$$
so $(ii)\Rightarrow (iii)$.  Finally, assume $(iii)$, and let $H$ be a proper $A$-subgroup of $G$.  By Theorem \ref{Kahn's modified 1}, $[H,\gamma_2(G)]\leq  \gamma_2(G)^{2}$. Now, Proposition \ref{Key for generator's number} yields
$$d(H\gamma_2(G))\geq d(H)+d(\gamma_2(G))-d(H\cap \gamma_2(G)).$$
As $\gamma_2(G)$ is powerful (by Corollary \ref{Cor main 2}), we have  $d(\gamma_2(G))\geq d(H\cap \gamma_2(G))$ (see \cite[Theorem 2.9]{DDMS}); so $d(H\gamma_2(G))\geq d(H)$; and plainly $H\gamma_2(G)$ is  a proper $A$-subgroup of $G$, so $d(H\gamma_2(G))<d(G)$; thus $(i)$ holds.  
\end{proof}

\begin{proof}[Proof of Theorem \ref{Nonmetacyclic}]
(i) Set $N=G^4$. Obviously, $N\leq P_3(G)$; and by Corollary \ref{Cor main 2}, $P_3(G/N)=(G/N)^{4}=1$,  hence $P_3(G)\leq N$; this shows that $ P_3(G)= N$.  Similarly, we have  $\gamma_2(G/N)=P_2(G)/N$, so $\gamma_2(G) N=P_2(G)$; thus
$$P_2(G)=\gamma_2(G) P_2(G)^2[P_2(G),G].$$
It follows that $P_2(G)=\gamma_2(G)$ (see the introduction of Section \ref{By-product of being powerful}), so $G/\gamma_2(G)$ is elementary abelian, and subsequently the same is true for $\gamma_2(G)/\gamma_3(G)$ (see the proof of Lemma \ref{Basic facts}(4)).  Thus,  $P_3(G)= \gamma_3(G)$, and so
$G/\gamma_3(G)$ is $d$-maximal for $A$-subgroups;  the result  is immediate now by Corollary \ref{Equivalence for d-maximality}.
	
(ii) The assumption implies that $G/G^{4}$ is a $d$-maximal, and $d(G)=d(G/G^{4})=3$. By (i), $G$ is likewise $d$-maximal, so  by Proposition \ref{dMax 2Grp on 3-gen have class 2}, $G$ has class $\leq 2$, that is $G^{4}=1$; the result follows. 

\end{proof}

\section{Further results}\label{Futher Results}
We start by two general facts that we need in the sequel.
\begin{lemma}\label{Normality of secondMax subgroups} 	If $G$ is a $p$-group in which every maximal subgroup $M$ satisfies $d(M) = d(G) - 1$,  then all the subgroups of index  $p^{2}$ in $G$ are normal.
\end{lemma}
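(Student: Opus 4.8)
The plan is to show that an arbitrary subgroup $L$ of index $p^2$ in $G$ is normal by exhibiting it as an intersection of normal subgroups, or equivalently by showing it contains a normal subgroup of the same index. The key structural input is the hypothesis $d(M)=d(G)-1$ for every maximal $M$, which by the argument in Lemma \ref{Basic facts}(5) forces $\Phi(M)=\Phi(G)$ for every maximal subgroup $M$. I would first record this consequence: for every maximal $M\leq G$ we have $\Phi(M)=\Phi(G)$, hence $\Phi(G)\leq L$ whenever $L$ is contained in a maximal subgroup $M$ with $L\supseteq\Phi(M)$ — but one must be careful, since a subgroup of index $p^2$ need not contain $\Phi(G)$ a priori. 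So the real first step is to argue that any subgroup $L$ of index $p^2$ does contain $\Phi(G)=\Phi(M)$ where $M$ is a maximal subgroup containing $L$: indeed $L$ has index $p$ in $M$, so $\Phi(M)\leq L$, and $\Phi(M)=\Phi(G)$ by the above. Therefore $\Phi(G)\leq L$.

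Once we know $\Phi(G)\leq L$, we pass to the elementary abelian group $\bar G=G/\Phi(G)$, viewed as an $\mathbb{F}_p$-vector space of dimension $r=d(G)$. Subgroups of $G$ containing $\Phi(G)$ correspond to subspaces of $\bar G$, and every such subgroup is automatically normal in $G$ (it contains $\gamma_2(G)\leq\Phi(G)$, so $G$ acts trivially on the quotient). Thus $L$, corresponding to a codimension-$2$ subspace of $\bar G$, is normal in $G$. The statement follows.

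The step that needs the most care — and is really the crux — is the claim that a subgroup of index $p^2$ must contain $\Phi(G)$. This is where the hypothesis is essential: in a general $p$-group a subgroup of index $p^2$ need not contain $\Phi(G)$ (e.g.\ in $C_{p^2}$ or in groups with $\Phi(G)$ large), but here every maximal subgroup $M$ satisfies $\Phi(M)=\Phi(G)$, so climbing one step down from $G$ to a maximal $M$ and then one more step to $L$ keeps us above $\Phi(M)=\Phi(G)$. I would present the chain $L\leq M\leq G$, note $|M:L|=p$ hence $\Phi(M)\leq L$, and invoke $\Phi(M)=\Phi(G)$. The remaining linear-algebra / correspondence-theorem part is routine and I would state it in one or two lines.
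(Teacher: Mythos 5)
Your proposal is correct and follows essentially the same route as the paper: take a maximal $M$ containing $L$, note $\Phi(M)\leq L$ since $|M:L|=p$, and use the hypothesis $d(M)=d(G)-1$ (together with $\Phi(M)\leq\Phi(G)$ and the index count) to get $\Phi(M)=\Phi(G)$, so $\gamma_2(G)\leq\Phi(G)\leq L$ and $L$ is normal. The only cosmetic point is to state the counting $|G:\Phi(G)|=p\,|M:\Phi(M)|$ explicitly rather than citing Lemma \ref{Basic facts}(5), whose proof assumes $d$-maximality; the computation itself is identical.
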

Indeed, let $H \leq G$ be of index $p^{2}$, and $M$ be a maximal subgroup containing $H$.  Then $H$ is maximal in $M$,  so $\Phi(M) \leq H$.  On the other hand, we have 
$\vert G : \Phi(G)\vert=p\vert M : \Phi(M)\vert $,  so $\Phi(M) = \Phi(G)$.  Thus
$\gamma_{2}(G) \leq H$; the result follows.

\begin{lemma}\label{Omega in d-max p odd}
	If $G$ is a non-abelian $d$-maximal $p$-group, with $p>2$, then
	\begin{itemize}
		\item[(i)] $\Omega_{1}(G)$ is not abelian.
		\item[(ii)] $d(\gamma_{2}(G)) \leq d(G) - 2$.
	\end{itemize}
\end{lemma}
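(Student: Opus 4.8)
The plan is to first extract the structural consequences that $d$-maximality imposes, and then to play a proper subgroup against $G$. Applying Theorem \ref{Main 1} with trivial $A$, the group $G$ has nilpotency class $\leq 2$, hence exactly $2$ since it is non-abelian; in particular $\gamma_2(G)\leq Z(G)$. By Lemma \ref{Basic facts}(3) we have $\Phi(G)=\gamma_2(G)$, so $\vert G:\gamma_2(G)\vert=p^{d(G)}$; and by Lemma \ref{Basic facts}(4) the group $\gamma_2(G)$ is elementary abelian. As recorded in the remark following Corollary \ref{Coro Laffey}, $G$ is then $p$-abelian, so $\Omega_1(G)=\{x\in G\mid x^p=1\}$ is a subgroup of exponent dividing $p$ satisfying $\vert G:G^p\vert=\vert\Omega_1(G)\vert$, and clearly $\gamma_2(G)\leq\Omega_1(G)$.

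For (i), I would argue by contradiction: if $\Omega_1(G)$ were abelian it would be elementary abelian (exponent $p$), whence $d(\Omega_1(G))=\log_p\vert\Omega_1(G)\vert=\log_p\vert G:G^p\vert\geq\log_p\vert G:\Phi(G)\vert=d(G)$, using $G^p\leq\Phi(G)$. Since $G$ is $d$-maximal, $\Omega_1(G)$ cannot be a proper subgroup, so $\Omega_1(G)=G$; but then $G=\Omega_1(G)$ is abelian, contradicting the hypothesis. The only point requiring care here is that $\Omega_1(G)$ genuinely is a subgroup of exponent $p$ with $\vert G:G^p\vert=\vert\Omega_1(G)\vert$, which is precisely where the $p$-abelianness (hence the hypothesis $p>2$, via Theorem \ref{Main 1}) is used; this is the main obstacle, and everything else is bookkeeping.

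For (ii), set $s=d(\gamma_2(G))$, which equals $\log_p\vert\gamma_2(G)\vert$ as $\gamma_2(G)$ is elementary abelian. Since $G$ is non-abelian we have $d(G)\geq2$ and $\gamma_2(G)=\Phi(G)$ is a proper subgroup; moreover, by part (i), $\Omega_1(G)$ is non-abelian, so it strictly contains the abelian group $\gamma_2(G)$. Choosing $x\in\Omega_1(G)\setminus\gamma_2(G)$, we get $x^p=1$ and $\langle x\rangle\cap\gamma_2(G)=1$, so $H:=\langle x\rangle\gamma_2(G)$ is abelian (because $\gamma_2(G)\leq Z(G)$) and, being a product of two groups of exponent $p$ meeting trivially, is elementary abelian of rank $s+1$; thus $d(H)=s+1$. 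Finally $\vert G:\gamma_2(G)\vert=p^{d(G)}\geq p^2$ forces $H<G$, so $d$-maximality gives $d(H)<d(G)$, i.e.\ $s+1<d(G)$, which is exactly $d(\gamma_2(G))\leq d(G)-2$.

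In summary, I would present (i) first, then derive (ii) from it by the displayed index count; the substantive step is (i), and within it the verification that $\Omega_1(G)$ behaves like a "linear" object (subgroup, exponent $p$, index equal to $\vert\Omega_1(G)\vert$), which rests entirely on $G$ being of class $\leq2$ and $p$ odd.
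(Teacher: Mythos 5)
Your proof is correct and follows essentially the same route as the paper: part (i) via the $p$-abelianness coming from class $\leq 2$ (Theorem \ref{Main 1}) and the count $p^{d(G)}=\vert G:\Phi(G)\vert\leq\vert G:G^{p}\vert=\vert\Omega_1(G)\vert$, and part (ii) via the elementary abelian subgroup $\gamma_2(G)\langle x\rangle$ of rank $d(\gamma_2(G))+1$. The only (immaterial) difference is at the end of (ii): you note directly that this subgroup is proper because $d(G)\geq 2$, whereas the paper argues by contradiction that otherwise $G=\gamma_2(G)\langle x\rangle=\langle x\rangle$ would be cyclic.
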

If  $\Omega_{1}(G)$ is abelian, then clearly $\vert \Omega_{1}(G)\vert = p^{d(\Omega_{1}(G))}$.  Also,  $G$ has class $2$  and $\gamma_2(G)$ is elementary abelian (by Theorem \ref{Main 1}, or Lemma \ref{Basic facts}(4)), so the map $x\mapsto x^p$ is an endomorphism of $G$.  It follows that
$$p^{d(G)} = \left| G : \Phi (G) \right| \leq \left| G : G^{p} \right|  = \left| \Omega_{1}(G )\right| = p^{d(\Omega_{1}(G))}.$$ 
Hence, $d(G) \leq d(\Omega_{1}(G))$, a contradiction; this proves (i).	Next,  we have $\gamma_{2}(G) < \Omega_{1}(G)$ by  (i), so one can find
$ x \in G\setminus \gamma_{2}(G)$ of order $p$.   It follows that
$H = \gamma_{2}(G)\left\langle x \right\rangle $ is elementary abelian of rank $d(\gamma_2(G))+1$.  If (ii) is false, then we would have  $G =H$, and so $G = \left\langle x \right\rangle $, a contradiction.  
\medskip

\begin{proposition}\label{dMax p-groups, 3 gen and p odd}
If $G$ is a $d$-maximal non-abelian $p$-group, with $d(G)=3$ and $p>2$, then $G$ is isomorphic to the central product  $C_{p^2}*S$, where $S$ is the extraspecial $p$-group of order $p^3$ and exponent $p$; in other words 
	$$G = \langle a, b, c \mid a^{p^{2}}=b^{p}=c^{p}=1, [ a, b]=[a, c]=1,  [ b, c] = a^{p} \rangle . $$
\end{proposition}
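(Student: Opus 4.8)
The plan is to pin down the structure of $G$ using the two lemmas just proved together with the class-$\leq 2$ and $p$-abelian structure of $d$-maximal $p$-groups for $p>2$. First I would record the basic constraints: by Theorem \ref{Main 1} (or Lemma \ref{Basic facts}(4)) $G$ has class $\leq 2$ with $\gamma_2(G)=\Phi(G)$ elementary abelian, and $G$ is non-abelian by hypothesis, so $\gamma_2(G)\neq 1$; by Lemma \ref{Omega in d-max p odd}(ii) we get $d(\gamma_2(G))\leq d(G)-2 = 1$, hence $\gamma_2(G)$ is cyclic of order $p$, say $\gamma_2(G)=\langle z\rangle$ with $z^p=1$. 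Since $\gamma_2(G)=\Phi(G)$ has order $p$ and $d(G)=3$, we have $|G|=p^4$. Also $G$ is $p$-abelian (the map $x\mapsto x^p$ is an endomorphism, as noted after Corollary \ref{Coro Laffey}), so $G^p$ is a subgroup; and $G/G^p\cong \Omega_1(G)$ with $|G:G^p|=|\Omega_1(G)|$.

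Next I would locate the element of order $p^2$. By Lemma \ref{Omega in d-max p odd}(i), $\Omega_1(G)$ is non-abelian, so $\Omega_1(G)\neq \gamma_2(G)$ and in fact $\Omega_1(G)$ must be a proper subgroup of index $p$ (if $\Omega_1(G)=G$ then $G$ has exponent $p$, but then $|G:G^p|=|G| > p^3 = |\Omega_1(G)|$, contradicting $|G:G^p|=|\Omega_1(G)|$ unless I argue more carefully — actually exponent $p$ forces $G^p=1$ so $|\Omega_1(G)|=|G|$, fine, but then $\Omega_1(G)=G$ is non-abelian of order $p^4$ with $\gamma_2$ of order $p$, which is allowed; so I need to rule this out differently). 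The cleaner route: since $G$ is $p$-abelian with $\gamma_2(G)$ of exponent $p$, we have $p^{d(G)}\le |G:G^p| = |\Omega_1(G)|$, and if $G$ had exponent $p$ then $\Omega_1(G)=G$, which is consistent — so exponent $p$ is NOT excluded by counting alone and I must use $d$-maximality directly: the extraspecial group of order $p^4$... wait, there is no extraspecial group of order $p^4$. A group of order $p^4$ with $|\gamma_2(G)|=|\Phi(G)|=p$ and $d(G)=3$ has center of order $p^2$ (since $G/Z(G)$ is non-cyclic, $|Z(G)|\le p^2$, and $\gamma_2\le Z$, $G/\gamma_2$ elementary abelian forces $\Phi(G)\le Z$ so $|Z|\ge p^2 \cdot$ ... at least $\gamma_2$; one computes $|Z(G)|=p^2$). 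So $Z(G)$ has order $p^2$ containing $z$. Now if $G$ has exponent $p$, pick any $x\in G\setminus Z(G)$; then $H=\langle x\rangle Z(G)$ is an abelian subgroup (since $Z(G)$ central) of order $p^3$, elementary abelian, with $d(H)=3=d(G)$, contradicting $d$-maximality. Hence $G$ does not have exponent $p$, so there is $a\in G$ with $a^p\neq 1$; since $G$ is $p$-abelian and $\gamma_2(G)$ has exponent $p$, $a^{p^2}=1$, i.e. $a$ has order $p^2$ and $a^p\in Z(G)\cap\gamma_2$-ish — more precisely $a^p\in G^p\le Z(G)$.

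Then I would finish by a short structure argument. Set $z=a^p$ (I claim $\langle a^p\rangle = \gamma_2(G)$: since $a^p\ne 1$ and $\gamma_2(G)$ has order $p$... but $a^p$ need not lie in $\gamma_2(G)$ — it lies in $G^p\le Z(G)$; I should instead just work with $Z(G)$, which is either $C_{p^2}$ or $C_p\times C_p$). If $Z(G)\cong C_{p^2}$, generated by $a$ with $a^p=z\in\gamma_2(G)$ (one checks $\gamma_2(G)\le Z(G)$ is the unique subgroup of order $p$ here, and $a^p$ generates it since $G^p\le Z(G)$ is nontrivial of order dividing $p^2$; a counting check gives $G^p=\langle z\rangle$). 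Since $G/Z(G)$ has order $p^2$ and $\gamma_2(G)\le Z(G)$ has order $p$, $G/\gamma_2(G)$ is elementary abelian of order $p^3$; choose $b,c\in G$ mapping to a complement of $aZ(G)/\gamma_2(G)$... rather, choose $b,c$ so that $\{a\gamma_2(G), b\gamma_2(G), c\gamma_2(G)\}$ is a basis of $G/\gamma_2(G)$ with $a$ central modulo nothing — i.e. arrange $[a,b]=[a,c]=1$ (possible since $a\in$... hmm $a$ is central, so automatically $[a,b]=[a,c]=1$). Then $[b,c]\in\gamma_2(G)=\langle z\rangle$ must be a generator (else $G$ abelian), so $[b,c]=z^k$ with $k\not\equiv 0$; replacing $b$ by a power we may take $[b,c]=z=a^p$. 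Finally adjust $b,c$ to have order $p$: if $b^p\ne 1$ then $b^p\in G^p=\langle z\rangle$, and since $G$ is $p$-abelian, $(ba^{-j})^p = b^p a^{-jp}=b^p z^{-j}$, so for suitable $j$ we get order $p$; note $[b',c]=[b,c]$ is unchanged since $a$ is central, and $a$ is still of order $p^2$. This yields exactly the presentation $\langle a,b,c\mid a^{p^2}=b^p=c^p=1,\ [a,b]=[a,c]=1,\ [b,c]=a^p\rangle$, which is the central product $C_{p^2}*S$. The remaining case $Z(G)\cong C_p\times C_p$ must be excluded: here $G^p\le Z(G)$ with $G/G^p\cong\Omega_1(G)$; if $\exp G = p^2$ then $G^p\ne 1$, say $|G^p|=p$, and $|\Omega_1(G)|=p^3$, so $\Omega_1(G)$ is a maximal subgroup, non-abelian of order $p^3$ and exponent $p$, with $d(\Omega_1(G))=d(G)-1=2$ — fine so far — but then $Z(G)\le\Omega_1(G)$, and one shows $\gamma_2(G)\le Z(G)$ with $Z(G)=C_p\times C_p$ forces $G^p\not\subseteq\gamma_2(G)$ impossible in class 2 with... actually I expect $Z(G)\cong C_p\times C_p$ simply cannot occur together with $\exp G=p^2$ and $|G|=p^4$, $|\gamma_2|=p$: the element $a$ of order $p^2$ satisfies $a^p\in G^p\le Z(G)$, $\langle a\rangle\cap Z(G)=\langle a^p\rangle$, and $\langle a\rangle Z(G)$ is abelian of order $p^3$ (since $Z$ central) — is it elementary abelian? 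No, it contains $a$ of order $p^2$, so $d(\langle a\rangle Z(G)) = 2 < 3$, no contradiction there either.

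\textbf{Main obstacle.} The genuinely delicate point is excluding the possibility $Z(G)\cong C_p\times C_p$ (equivalently, showing the element of order $p^2$ can be taken \emph{central}, i.e. $G^p$ meets the picture correctly), and more basically ruling out exponent $p$ via $d$-maximality rather than mere order counting. I would handle exponent $p$ by the abelian-subgroup argument above ($\langle x\rangle Z(G)$ elementary abelian of rank $3$ when $\exp G=p$), and handle the center by arguing that any maximal abelian subgroup $B\supseteq Z(G)$ has order $p^3$ hence $d(B)\le 2$, forcing $B$ to contain an element of order $p^2$; chasing which maximal abelian subgroup can contain $Z(G)$ and using $d$-maximality on subgroups of index $p^2$ (Lemma \ref{Normality of secondMax subgroups}, giving all index-$p^2$ subgroups normal) should pin down $Z(G)\cong C_{p^2}$. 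Once the center is known to be cyclic of order $p^2$, the presentation falls out by the explicit generator-choosing procedure sketched above.
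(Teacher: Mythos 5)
Your preliminary reductions are sound (and in one place more careful than the paper): $\lvert\gamma_2(G)\rvert=p$ and $\lvert G\rvert=p^4$ follow from Theorem \ref{Main 1} and Lemma \ref{Omega in d-max p odd}(ii); $\lvert Z(G)\rvert=p^2$ does hold (if $Z(G)=\gamma_2(G)$ the commutator map would give a nondegenerate alternating form on the rank-$3$ space $G/Z(G)$, which is impossible); and your exclusion of exponent $p$ via the elementary abelian rank-$3$ subgroup $\langle x\rangle Z(G)$ is a correct and necessary step, one the paper passes over when it asserts $\lvert G^p\rvert=p$. The cyclic-centre half of your argument (choose $a$ generating $Z(G)\cong C_{p^2}$, note $\gamma_2(G)=G^p=\langle a^p\rangle$, complete to a basis mod $\Phi(G)$, normalize $[b,c]=a^p$ and kill $b^p,c^p$ using $p$-abelianness) is essentially correct. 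But the proof is genuinely incomplete at exactly the point you flag: the case $Z(G)\cong C_p\times C_p$ is never excluded, and your fallback plan does not exclude it. Knowing that every maximal abelian subgroup $B\supseteq Z(G)$ has order $p^3$, satisfies $d(B)\le 2$, and hence contains an element of order $p^2$ is perfectly compatible with a non-cyclic centre: it simply says $B\cong C_{p^2}\times C_p$ with $\Omega_1(B)=Z(G)$, which is what happens in $S\times C_p$ for $S$ extraspecial of exponent $p^2$ (the group your argument must ultimately rule out), so no contradiction falls out of that count, and "chasing" with Lemma \ref{Normality of secondMax subgroups} is not a proof. Since the entire construction of the presentation rests on $Z(G)$ being cyclic, this is a missing step, not a cosmetic one.

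The irony is that you already have the ingredients to close it in the same sentence where you abandon it: once exponent $p$ is excluded, $G^p=\gamma_2(G)$ has order $p$, so $\Omega_1(G)$ has order $p^3$ and exponent $p$, and it is non-abelian by Lemma \ref{Omega in d-max p odd}(i); hence $\Omega_1(G)$ is extraspecial, so $\lvert Z(\Omega_1(G))\rvert=p$. If $Z(G)$ were elementary abelian of rank $2$ it would lie in $\Omega_1(G)$ and centralize it, giving $Z(G)\le Z(\Omega_1(G))$, a contradiction. Note also that the paper's own proof avoids the dichotomy on $Z(G)$ altogether: it fixes a normal elementary abelian $A$ of rank $2$, observes $A\nleq Z(G)$ (else $\Omega_1(G)$ would be abelian), shows $M=C_G(A)$ has index $p$ and is abelian of order $p^3$ with $d(M)\le 2$, hence $M\cong C_{p^2}\times C_p$, extracts $u\in M$ with $u^p=[v,w]$ for generators $v,w$ of $\Omega_1(G)$, and uses normality of $\langle u\rangle$ (Lemma \ref{Normality of secondMax subgroups}) to adjust generators; your route through the centre works too, but only after the non-cyclic case is actually eliminated.
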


\begin{proof}
By Theorem \ref{Main 1} and Lemma \ref{Omega in d-max p odd}(ii), we have $\vert \gamma_2(G)\vert =p$,  so $\vert G\vert =p^{4}$.  Moreover, $\Omega_{1}(G)$ has order $p^3$ as  $\vert G : \Omega_{1}(G) \vert = \vert G^{p} \vert =p$, and  $\Omega_{1}(G)$ is not abelian by Lemma \ref{Omega in d-max p odd}(i).  Let $A\lhd G$ be   elementary abelian of rank $2$; as $A\leq \Omega_1(G)$, we have $A \nleq Z(G)$, hence $M= C_{G}(A)<G$.  Also, $G/M$ embeds naturally in $\mathrm{Aut}(A)\cong \mathrm{GL}_{2}(\mathbb{F}_p)$,   so $\vert G : M \vert = p$, that is $\vert M\vert =p^3$;  and $M$ is abelian because $A\leq Z(M)$.  Choose $v\in A$ and $w\in \Omega_1(G)$ such that $\Omega_1(G)=\langle v,w\rangle$. As $[v,w]\neq 1$, one can find $u\in M$ such that $[v,w]=u^p$.   Now,  $u$ has order $p^2$, so $\langle u\rangle \lhd G$ by Lemma \ref{Normality of secondMax subgroups}; hence  $[u,w]=u^{ip}$ for some $0\leq i<p$. If $i=0$, set $a=u$, $b=v$ and $c=w$; and if $i\neq 0$, set $a=uv^{-i}$, $b=v^i$ and $c=w^j$, where $ij=1 \mod p$.   It follows that $G=\langle a,b,c\rangle$, and
$a^{p^2}=b^p=c^p=1$, $[a,b]=[a,c]=1$, and $[b,c]=a^p$, as desired.  Note that $\Omega_1(G)= S$, and $G=\langle a\rangle * S$. (It should be clear that the latter group is $d$-maximal.)

\end{proof}
The next result is due to Y. Berkovich (see \cite[Proposition 10.17]{Berk1}); its proof is direct, and follows easily by induction on $\vert G\vert $. 
\begin{lemma}\label{Berkovich on self-centralizing groups}
	Let $G$ be a $p$-group.  If  $ G$ contains a non-abelian self-centralizing subgroup of order  $p^3$, then $G$ is of maximal class; in particular $d(G)=2$.
\end{lemma}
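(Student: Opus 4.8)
The plan is to prove Lemma \ref{Berkovich on self-centralizing groups} by induction on $\vert G\vert$. Suppose $S\leq G$ is non-abelian, self-centralizing, with $\vert S\vert=p^3$; we want to show $G$ is of maximal class (so that $d(G)=2$, since $p$-groups of maximal class are $2$-generated). The base case is $\vert G\vert=p^3$, where $G=S$ is itself non-abelian of order $p^3$, hence of maximal class, and there is nothing more to prove. So assume $\vert G\vert=p^{n}$ with $n\geq 4$, and that the statement holds for all $p$-groups of smaller order.

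The key observation is that since $S$ is self-centralizing, $Z(G)\leq C_G(S)=Z(S)$, and $Z(S)$ has order $p$ (as $S$ is non-abelian of order $p^3$); hence $\vert Z(G)\vert=p$ and $Z(G)=Z(S)$. First I would pass to the quotient $\bar G=G/Z(G)$, and consider $\bar S=S/Z(G)$, which has order $p^2$ and is therefore abelian. I would like to apply induction to a suitable subgroup of $\bar G$. The natural candidate is a non-abelian subgroup of order $p^3$ inside $\bar G$ that is self-centralizing there; to produce one, take a maximal subgroup $M$ of $G$ containing $S$ (note $n\geq 4$ forces $S<G$, so such $M$ exists and $S$ has index $p^{n-3}\geq p$ in $G$, hence index $p$ in some maximal subgroup if $n=4$; in general $S$ lies in a chain). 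Actually the cleanest route: let $M$ be any maximal subgroup of $G$ containing $S$. Then $S$ is self-centralizing in $M$ as well (since $C_M(S)\leq C_G(S)=Z(S)\leq S$), so by induction applied to $M$ — valid since $\vert M\vert<\vert G\vert$ — either $M=S$ (if $n=4$) or $M$ is of maximal class; in either case $M$ is of maximal class of order $p^{n-1}$, hence $\gamma_2(M)$ has order $p^{n-3}$ and $M/\gamma_2(M)$ has order $p^2$, so $d(M)=2$.

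Now I would use the structure of a $p$-group $G$ one of whose maximal subgroups $M$ is of maximal class to conclude $G$ itself is of maximal class. This is a standard fact (see e.g.\ \cite[\S14]{Berk1}): if $M\lhd G$ with $\vert G:M\vert=p$ and $M$ of maximal class and $\vert M\vert=p^{n-1}\geq p^3$, then $G$ is of maximal class provided $C_G(\gamma_{n-2}(M))$ does not contain $M$ properly — equivalently, provided $G$ is not "too abelian" at the top. Here the self-centralizing hypothesis on $S$ rescues us: the subgroup $\gamma_2(G)$ has index at least $p^2$ (since $G$ is non-abelian and $d(G)\geq 2$), and one checks using $Z(G)=Z(S)$ of order $p$ together with $C_G(S)=Z(S)$ that $\vert G:\gamma_2(G)\vert=p^2$ exactly, so $G$ is of maximal class. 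Concretely: $G/Z(G)$ acts on $S/Z(G)\cong C_p\times C_p$ faithfully on the $S$-part via conjugation because $C_G(S)=Z(S)$, and combining the maximal-class structure of $M$ with this constraint pins down the lower central series lengths; I would cite \cite{Berk1} for the routine verification rather than reproduce it.

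The main obstacle is the inductive step that promotes "$M$ of maximal class" to "$G$ of maximal class": this requires knowing that $M$ cannot be centralized in an abnormally large way inside $G$, and the only lever available is the self-centralizing property of $S$, which controls $Z(G)$ but must be leveraged carefully through the chain $S\leq M\leq G$. The cleanest way to handle it is to observe directly that $d(G)\leq d(M)+1$ is too weak, and instead argue that $\Phi(G)\supseteq \gamma_2(M)$ together with $\vert Z(G)\vert=p$ forces $\vert G/\gamma_2(G)\vert=p^2$; once $d(G)=2$ is established, $G$ of maximal class follows from the fact that a $2$-generated $p$-group with a maximal subgroup of maximal class and with center of order $p$ is itself of maximal class. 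I expect the bookkeeping with the lower central series to be the only delicate point, and it is entirely standard, so I would defer it to \cite[Proposition 10.17]{Berk1} as the statement itself does.
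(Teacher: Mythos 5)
There is a genuine gap, and it sits exactly at the crucial point of your induction. Up to the statement ``$M$ is of maximal class'' your argument is fine: $Z(G)\le C_G(S)=Z(S)$ gives $|Z(G)|=p$, and $S$ is still self-centralizing in any maximal $M\ge S$, so induction applies to $M$. But the promotion from ``$M$ of maximal class'' to ``$G$ of maximal class'' is never actually proved. You assert that ``one checks'' $|G:\gamma_2(G)|=p^2$, but this is precisely the hard step and nothing in your text establishes it; from $d(M)=2$ you only get $d(G)\le 3$, and ruling out $d(G)=3$ requires using that $S$ is self-centralizing in $G$ (not merely in $M$), which you never do beyond saying the hypothesis ``rescues us.'' Moreover, even granting $d(G)=2$, ``$d(G)=2$ implies maximal class'' is false in general, and the auxiliary ``standard fact'' you fall back on (a $2$-generated $p$-group with a maximal subgroup of maximal class and center of order $p$ is of maximal class) is not a textbook statement and is itself doing all the work; note that the center condition alone certainly does not suffice: $G=D_{16}\rtimes C_2$ with the outer action $r\mapsto r^5$, $s\mapsto s$ has a maximal subgroup $D_{16}$ of maximal class and $|Z(G)|=2$, yet has class $3$ and order $2^5$ (here $d(G)=3$, which is exactly the case your unproven claim must exclude). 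Finally, deferring the ``routine verification'' to \cite[Proposition 10.17]{Berk1} is circular: that proposition \emph{is} the lemma you are asked to prove.

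For comparison, the paper itself gives no argument either (it quotes Berkovich and remarks that the proof is a direct induction on $|G|$), so a complete write-up has to supply the missing step. The standard way, which also uses your first observation, is to work with the quotient you abandoned: since $Z(G)=Z(S)$ has order $p$, pass to $\bar G=G/Z(G)$ and show that $\bar S=S/Z(G)$ is a self-centralizing subgroup of order $p^2$ of $\bar G$. Indeed, if $[x,S]\le Z(S)$ then $x$ induces on $S$ an automorphism trivial on $S/Z(S)$ and on $Z(S)$; such automorphisms form a group of order $p^2$ which coincides with $\mathrm{Inn}(S)$, so $x\in S\,C_G(S)=S$. One then concludes either by the classical criterion that a $p$-group containing a self-centralizing subgroup of order $p^2$ is of maximal class, or by running the induction on this sharper statement. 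Your outline, as written, does not contain this (or any equivalent) idea, so it does not yet constitute a proof.
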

Note that this lemma can be used to prove Proposition \ref{dMax p-groups, 3 gen and p odd} (indeed, pick $a\in C_G(\Omega_1(G))\setminus \Omega_1(G)$, and observe that $G=\Omega_1(G)*\langle a\rangle$). It is also  useful in proving the next result, and completing the proof of Theorem \ref{Class of 3-gen d-max 2-grp}.

\begin{proposition}\label{dMax 2Grp on 3-gen have class 2}
If $G$ is a $d$-maximal $2$-group with $d(G)=3$, then $G$ has nilpotency class $\leq 2$.

\end{proposition}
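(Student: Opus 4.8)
I would argue by contradiction, taking $G$ to be a counterexample of minimal order: a $d$-maximal $2$-group with $d(G)=3$ and $\gamma_3(G)\neq 1$. First I would reduce to the case $\gamma_3(G)=\langle z\rangle$ with $z$ central of order $2$. Since $\gamma_3(G)\leq\Phi(G)$, any nontrivial normal subgroup $N\leq\Phi(G)$ yields a $d$-maximal $2$-group $G/N$ with $d(G/N)=3$ by Lemma~\ref{Basic facts}(2), hence of class $\leq 2$ by minimality, so $\gamma_3(G)\leq N$; taking $N\leq\gamma_3(G)\cap Z(G)$ of order $2$ forces $\gamma_3(G)=N$, and then $\gamma_4(G)=[\gamma_3(G),G]=1$, so $G$ has class exactly $3$. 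Next I would describe $\Phi(G)$: by Corollary~\ref{Cor main 2} it equals $\gamma_2(G)=G^{2}$ and is powerful, and by Theorem~\ref{Kahn's modified 1} together with Lemma~\ref{Basic facts}(4), $\Phi(\gamma_2(G))=\gamma_2(G)^{2}=\gamma_3(G)$ has order $2$; hence $[\gamma_2(G),\gamma_2(G)]\leq\gamma_2(G)^{4}=1$, so $\gamma_2(G)$ is abelian, and therefore $\gamma_2(G)\cong C_4\times C_2^{k}$ for some $k$ (an abelian $2$-group with Frattini subgroup of order $2$). Since $\Omega_1(\gamma_2(G))\cong C_2^{k+1}$ is contained in $\Phi(G)<G$, $d$-maximality forces $k+1=d(\Omega_1(\gamma_2(G)))\leq 2$, so $k\leq 1$.

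Now set $V=G/\gamma_2(G)$, of $\mathbb F_2$-dimension $3$, and $W=\gamma_2(G)/\gamma_3(G)$, of dimension $k+1\in\{1,2\}$. Using that $\gamma_2(G)$ is abelian and $\gamma_2(G)^{2}=\gamma_3(G)\leq Z(G)$, the assignment $\bar g\mapsto g^{2}\gamma_3(G)$ is a well-defined quadratic map $Q\colon V\to W$ whose polarization is the commutator map $\kappa(\bar g,\bar h)=[g,h]\gamma_3(G)$; moreover $\langle Q(V)\rangle=G^{2}\gamma_3(G)/\gamma_3(G)=W$. Dually, $\bar g\mapsto\bigl(\bar t\mapsto[g,t]\bigr)$ is a well-defined linear map $\Psi\colon V\to W^{*}$, nonzero because $[\gamma_2(G),G]=\gamma_3(G)\neq 1$. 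The crucial relation, obtained from $g$ commuting with $g^{2}$, is
\[
\langle\Psi(\bar g),\,Q(\bar g)\rangle=[g,g^{2}]=0\qquad\text{for all }\bar g\in V.
\]

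The $d$-maximality of $G$ then enters through the following computation: for a $2$-dimensional $U\leq V$ with preimage $H_U<G$ (so $\gamma_2(G)\leq H_U$ of index $2$), one has $\gamma_3(G)\leq\Phi(H_U)=H_U^{2}[H_U,H_U]\leq\gamma_2(G)$, and the image of $\Phi(H_U)$ in $W$ equals $\langle Q(U)\rangle$ (the commutator part contributing only values of $\kappa|_U$, which already lie in $\langle Q(U)\rangle$). Hence $d(H_U)=2+\dim_{\mathbb F_2}\!\bigl(W/\langle Q(U)\rangle\bigr)$, and since $H_U$ is proper this must be $<3$; so $\langle Q(U)\rangle=W$ for \emph{every} $2$-plane $U\leq V$.

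The heart of the argument is to derive a contradiction from the displayed identity together with $\langle Q(V)\rangle=W$, $\Psi\neq 0$, and the conclusion just obtained. If $\mathrm{rank}\,\Psi=1$, say $\mathrm{im}\,\Psi=\langle\lambda\rangle$, then by the identity $Q$ carries the four vectors of $V\setminus\ker\Psi$ into $\ker\lambda$, so composing $Q$ with the quotient $W\to W/\ker\lambda\cong\mathbb F_2$ gives a quadratic form on $\mathbb F_2^{3}$ with at least five zeros; since a quadratic form on $\mathbb F_2^{3}$ has $2$, $4$, $6$ or $8$ zeros, it has $6$ or $8$ — but $8$ would give $Q(V)\subseteq\ker\lambda\subsetneq W$, against $\langle Q(V)\rangle=W$, while $6$ would give a $2$-plane $U$ with $Q(U)\subseteq\ker\lambda$, against the previous paragraph. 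If instead $\Psi$ is surjective (the only remaining case, and possible only when $\dim W=2$), then $\ker\Psi=\langle v_0\rangle$ is a line, and for each $v\notin\langle v_0\rangle$ the identity forces $Q(v),Q(v+v_0)\in\ker\Psi(v)$, so $\langle Q(\langle v_0,v\rangle)\rangle=W$ forces $Q(v_0)\notin\ker\Psi(v)$; as $v$ varies this would make $Q(v_0)$ pair nontrivially with every nonzero functional on $W$, which is impossible. In all cases we reach a contradiction, so no minimal counterexample exists. (Should this linear-algebra endgame prove more delicate than anticipated, a fallback is to use the structural part — which already gives $|G|\leq 2^{6}$ — and to finish the cases $|G|\in\{2^{5},2^{6}\}$ by direct inspection, invoking Lemmas~\ref{Normality of secondMax subgroups} and~\ref{Berkovich on self-centralizing groups}.)
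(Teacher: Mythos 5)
Your argument is correct, and it follows a genuinely different route from the paper's. The paper's proof also starts from a minimal counterexample with $\vert\gamma_3(G)\vert=2$, but then bounds $\vert G\vert\in\{2^5,2^6\}$, shows every subgroup of order $8$ is abelian via Lemma~\ref{Berkovich on self-centralizing groups}, kills the case $2^5$ at once, and settles $2^6$ by a fairly long centralizer analysis ($A=\Omega_1(\gamma_2(G))$, the subgroups $B_1,B_2$, $L=C_G(\gamma_2(G))$, and a final $D_8$/$Q_8$ contradiction in $G/B_1$), using also Lemma~\ref{Normality of secondMax subgroups}. You instead linearize the problem: with $\gamma_3(G)$ central of order $2$, $\gamma_2(G)$ abelian and $\gamma_2(G)^2=\gamma_3(G)$ (all of which you correctly extract from Theorem~\ref{Kahn's modified 1}, Corollary~\ref{Cor main 2} and Lemma~\ref{Basic facts}, none of which depend on this proposition, so there is no circularity), squaring and commutation descend to a quadratic map $Q\colon V\to W$ with polarization $\kappa$ and a linear map $\Psi\colon V\to W^{*}$; $d$-maximality applied to the seven maximal subgroups gives $\langle Q(U)\rangle=W$ for every $2$-plane $U\leq V$ (your computation of $\Phi(H_U)$ modulo $\gamma_3(G)$ is right, since $\gamma_3(G)=\gamma_2(G)^2\leq H_U^2$ and $\kappa\vert_U$ lies in $\langle Q(U)\rangle$ by polarization), and the relation $[g,g^2]=1$ together with the zero-count $\{2,4,6,8\}$ for quadratic forms on $\mathbb{F}_2^{3}$ (a $6$-zero form vanishing on a plane) yields the contradiction in both the rank-one and surjective cases of $\Psi$. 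This is in the spirit of the cohomological alternative the paper attributes to Minh, but purely elementary; it buys a shorter, more conceptual proof that avoids Lemmas~\ref{Normality of secondMax subgroups} and~\ref{Berkovich on self-centralizing groups} and all order-by-order casework, whereas the paper's explicit structural analysis also feeds directly into the classification in Theorem~\ref{Class of 3-gen d-max 2-grp}. If you write it up, spell out the routine checks that $Q$, $\kappa$, $\Psi$ are well defined (these use exactly $[\gamma_2(G),\gamma_2(G)]=1$, $\gamma_2(G)^2=\gamma_3(G)\leq Z(G)$) and cite the classification of quadratic forms over $\mathbb{F}_2$ for the zero-count; your fallback via $\vert G\vert\leq 2^6$ is not needed.
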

\begin{proof}

	Assume that $G$ is a  counterexample of minimal order, so  clearly  $\vert \gamma_3(G)\vert=2$.  As   $\vert  G/\gamma_2(G)\vert =2^{3}$, and $\vert 
	\gamma_2(G)/\gamma_3(G)\vert\leq 2^{2}$ (see Lemma \ref{Basic facts}(4)), we have $\vert G\vert=2^5 \mbox{ or } 2^6$.  
	
If $H\leq G$ is non-abelian of order $8$, then $C_G(H) \nsubseteq H$ by Lemma \ref{Berkovich on self-centralizing groups}; choose therefore   $x\in C_G(H)\setminus H$ of minimal order,  and let $K=H\langle x \rangle$. Plainly, $K^{2}=Z(H)$ has order $2$,  so $d(K)\geq 3$, but $\vert K\vert \leq 2^4$, a contradiction.  Thus, every subgroup of order $8$ in $G$ is abelian.

	Assume $\vert G\vert= 2^{5}$, so $\vert \gamma_2(G)\vert =2^2$.  If we choose  $x\in G$ which does not commute with $\gamma_2(G)$, then $\gamma_2(G)\langle x \rangle$ is  non-abelian  of order order $8$, a contradiction. So $\vert G\vert =2^{6}$, and   $\vert \gamma_2(G)\vert =2^3$. 
	
Let $A=\Omega_1(\gamma_2(G))$ and  $C=C_G(A)$.  As $\gamma_2(G)$ is abelian and  $d(\gamma_2(G))=2$ (by Theorem \ref{Kahn's modified 1}),  we have $A$ is elementary abelian of rank $2$.  Since   $G/C$ embeds naturally in $\mathrm{Aut}(A)$, we have $\vert G:C\vert\leq 2$. Assume $C=G$ (that is $A\leq Z(G)$), and  let  $N\leq A$ be of order $2$ with $N\neq \gamma_3(G)$. Then $N\lhd G$, and   $G/N$ is $d$-maximal of class $3$ (by Lemma \ref{Basic facts}(2)); this contradicts the minimality of $G$, so   $\vert G:C\vert =2$. Moreover, if $G$  contains an involution $t$ outside $A$, then   $A\langle t\rangle$ has order $8$, so  it is  $A\langle t\rangle$  is elementary abelian of rank $3$, a contradiction. The latter shows that $\Omega_1(G)=A$.

If $x\in G\setminus C$, then $x$  has order $8$,  and  $C_G(x)=\langle x \rangle$; moreover, we have $Z(G)\leq  \langle x^2 \rangle$.  Indeed,  $A\langle x\rangle$  is obviously non-abelian, so  its order is $\geq 2^4$; and  since  $x^4\in A$, we have $\vert A\langle x\rangle\vert =2^{4}$. If $x^4=1$, then  $x^{2}$ would lie in $A$, which implies that 	$A\langle x\rangle$ has  order $8$, a contradiction.  This proves that $x$ has order $8$.  Also, we have $\vert C_G(x)\vert \leq 8$, as otherwise  $C_G(x)$ would contain $\gamma_2(G)$ (by Lemma \ref{Normality of secondMax subgroups}), so $x$ commutes in particular with $A$, a contradiction.  It follows that $C_G(x)=\langle x \rangle$ (since $x$ has order $8$), and $Z(G)\leq  \langle x^2 \rangle$ (as $x\notin Z(G)$).  

Choose $x_1\in G\setminus C$, and let $B_1 =  \langle x_1^2 \rangle$.  Obviously, $A$ and $B_1$ are maximal subgroups of $\gamma_2(G)$, so it remains a third which we denote by $B_2$.   Note that $B_1$ and $B_2$  contain $\gamma_3(G)$, so they are normal in $G$, and both of them are cyclic of order $4$. Let $C_i=C_G(B_i)$;  as $G/C_i$ embeds in $\mathrm{Aut}(B_i)$, we have $\vert G:C_i\vert \leq 2$.

Assume  that $\vert G:C_1\vert = 2$. Let $L=C\cap C_1$; so $L=C_G(\gamma_{2}(G))$ because $\gamma_2(G)=AB_1$.  Observe also that $L=C\cap C_1$ and $L=C_1\cap C_2$ (because $AB_2=B_1B_2=\gamma_2(G)$).  Next, we have  $\vert L\vert =2^4$ (since $C_1\neq C$),  and  $d(L)=2$ since $L$ contains $\gamma_2(G)$.  It follows that $\vert L^2\vert=4$, that is  $L^2$ is one of $B_1$, $B_2$ or $A$.    Suppose $L^2=B_1$.  If $x^2\in B_1$ for every $x\in C_1\setminus L$, then $C_1^2=B_1$, a contradiction.  Therefore, there exists $x\in C_1\setminus L$ such that $x^2\notin B_1$.  But  $x$ has order $8$ (since $x\notin C$),  so $\langle x^2\rangle =B_2$, and subsequently $x\in  C_1\cap C_2=L$, a contradiction.  Similarly, if we assume 
$L^2=B_2$, then we can find $x\in C_2\setminus L$ such that $\langle x^2\rangle =B_1$, which implies that $x\in C_1\cap C_2=L$, a contradiction.  So we have to suppose $L^2=A$.  We know that $C$ contains an element $x$ of order $8$ (as $C^2=\gamma_2(G)$), so $\langle x^2\rangle$ is equal to $B_1$ or $B_2$, hence $x\in C\cap C_1$ or $x\in C\cap C_2$, that is $x\in L$, a contradiction.  

The last paragraph shows that $C_1=G$, that is $B_1=Z(G)$.  It follows that $C_G(\gamma_2(G))=C$, and so $[C,C]\leq B_1$ (by the three subgroups lemma). Also, we have  $y^2\in B_1$ for all $y\in G\setminus C$.  Indeed,  otherwise we would have $\gamma_2(G)=\langle y^2\rangle  B_1$ for some $y\in G\setminus C$,  so $C_G(y^2)=C$, which contradicts the fact that $y\notin C$.  Let $\overline{G}=G/B_1$; so $\overline{G}$ is $d$-maximal by Lemma \ref{Basic facts}(2), and $d(\overline{G})=3$.  Let $M\neq C$ be a maximal subgroup of $G$ such that $\overline{M}=G/B_1$ is non-abelian (such an $M$ exists as otherwise all the proper subgroups of $\overline{G}$ are abelian, so if we choose two non-commuting elements $\overline {x},\overline{y}\in \overline{G}$, then $ \langle \overline {x},\overline{y}\rangle = \overline{G}$, a contradiction).  Now, by Lemma \ref{Berkovich on self-centralizing groups}, $\overline{G}=\overline{M}\langle \overline{z}\rangle $ for some $\overline{z}\in C_{\overline{G}}(\overline{M})$.  For every $y\in M\setminus C$, we have $\overline{y}^2=1$, so $\overline{M}\ncong Q_8$; thus $\overline{M}\cong D_8$.   Let $\overline{a}\in \overline{M}$ be of order  $4$, and  $\overline{b}\in \overline{M}$ be of order  $2$ such that $\overline{M}=\langle \overline{a},\overline{b}\rangle$.  If $\overline{z}^2=1$, then  $\langle \overline{a}^2,\overline{b}, \overline{z} \rangle$ is  elementary abelian of rank $3$, a contradiction.  Thus  $\overline{z}$ has order $4$.  Now,  $\overline{N}=\langle \overline{a},\overline{bz} \rangle$ is isomorphic to $Q_8$, but for $\overline{y}\in \overline{N}\setminus \overline{C}$, we have $\overline{y}^2=1$, which yields the final contradiction.

\end{proof}

Now, we shall prove Theorem \ref{Class of 3-gen d-max 2-grp}.
Plainly, the case (a) occurs exactly when $G$ is abelian  (i.e., when $\vert G\vert=2^3$). 

Assume   $\vert G\vert=2^4$, so $\vert\gamma_2(G) \vert =2$. Choose two non-commuting elements $x,y\in G$, and set $H=\langle x,y\rangle$. Then  $H$ is non-abelian, and $H<G$ as $d(H)=2$, so $\vert H\vert =2^3$.  By Lemma \ref{Berkovich on self-centralizing groups},  $C_G(H)\nleq H$; choose an element $z\in C_G(H)\setminus H$.  If $z^2=1$, then $G=H\times \langle z\rangle$,   and $H\cong Q_8$  (otherwise, $H$ would contain a subgroup $A$ of the form $C_2\times C_2$, so $d(A\langle z\rangle )=3$, a contradiction); hence $G$ is as in (b).  If $z$ has order $4$, then  $G$ is clearly isomorphic to  (c).

Assume $\vert G\vert= 2^{5}$.  As $G$ has class $2$,  $\gamma_2(G)$  is elementary abelian of rank $2$ (by Theorem \ref{Kahn's modified 1}), and subsequently  $\Omega_1(G)=\gamma_2(G)$.  Let $x_1,x_2,x_3$ be  generators of $G$.  Then $[x_i,x_j]$, with $i<j$, generate  $\gamma_2(G)$; but $\dim_{\mathbb{F}_2} \gamma_2(G)=2$,  so there are $i, j, k\in \mathbb{F}_2$, not all zeros, such that
$$[x_1,x_2]^{i} [x_1,x_3]^{j} [x_2,x_3]^{k}=1.$$
If $i=0$, set  $u=x_1^jx_2^k$ and $v=x_3$; and if $i=1$, set $u=x_1x_3^{-k}$ and $v=x_2x_3^j$.  So $[u,v]=1$, that is $M=\langle u,v\rangle$ is an abelian.  We have $u,v\notin \gamma_{2}(G)$, so both of them have order $4$, and $u^2\neq v^2$, as otherwise $(uv)^2=u^2v^2=1$, a contradiction.  It follows that $M\cong C_4\times C_4$.

Choose $c\in G\setminus M$. The map $x\mapsto [x,c]$, from $M$ to $\gamma_2(G)$,  is clearly surjective, and its kernel coincides with $Z(G)$; hence $\vert Z(G)\vert =4$,  so $Z(G)=\gamma_2(G)$.  It follows that $C_G(x)=M$, and $[c,x]\neq x^2$ for every 
for  $x\in M\setminus \gamma_2(G)$ (otherwise, we would have $(xc)^2=c^2$, so  $\langle c,xc\rangle$ has order $8$; but the latter  contains $\gamma_2(G)$ by Lemma \ref{Normality of secondMax subgroups}, so $\langle c,xc\rangle$ is abelian, that is $c\in C_G(x)$, a contradiction).

Now, choose $a,b\in M$ such that $c^2=a^2$, and  $b^2=[a,c]$.  As we have seen above, $a^2\neq b^2$, so $M=\langle a,b\rangle$, that is  $G=  \langle a ,b,c\rangle$.  Also, $[b,c]\neq b^2$, so $[b,c]=a^2$ or $[b,c]=a^2b^2$.  If the latter holds, then $(cb)^2=c^2b^2a^2b^2=1$, a contradiction.  In conclusion, we have $a^4=b^4=[a,b]=1$, $a^2=c^2$, $[a,c]=ab^2$ and $[b,c]=ba^2$, as desired (that is $G$ is isomorphic to (d)).

\section{Concluding remarks and problems}\label{Last section}
Using $\mathtt{GAP}$ (\cite{GAP}), the  following code  checks whether a given $p$-group $G$ is $d$-maximal.  
\begin{verbatim}
IsDNMax:= function (G)
local nor, max;
nor := NormalSubgroups(G);
nor := Filtered(nor, x -> Size(x)<Size(G) and IsSubgroup(x, DerivedSubgroup(G)));
max := Maximum(List(nor, x -> RankPGroup(x)));
return max < RankPGroup(G);
end;
\end{verbatim}
(Note that we are using Corollary \ref{Equivalence for d-maximality}  to reduce the calculation.) 
\medskip

When working with a $\mathtt{list}$ of $p$-groups, the following  returns the sub-list formed by the ones which are $d$-maximal.
\begin{verbatim}
IsDNMaxList:= function(list)
local Li, i;
Li:=[];
for i in [1..Length(list)]  do 
if IsDNMax(list[i])
then Add(Li, list[i]);
fi;
od;
return Li;
end;
\end{verbatim}

Now, with the Small Groups library of $\mathtt{GAP}$, we can determine all the $d$-maximal  $2$-groups of order $\leq 2^8$.  Among these, there are exactly two which have (nilpotency) class $3$, namely 	$G_{1} =\mathtt{SmallGroup}(2^8, 22218)$, and $G_{2} = \mathtt{SmallGroup}(2^8, 22219) $ (both of them have order $2^8$).  We can find many other examples of higher order by taking, for instance, quotients of the $2$-covering group of each of $G_1$ and $G_2$ (it turned out that all of them have class $3$).  The remaining $d$-maximal $2$-groups have class $2$ (except of course the elementary abelian $2$-groups of rank $\leq 8$); there are too many of them as the following table shows (below, $G$ is said to be of type  $(a,b)$, if $d(G)=a$ and $d(G^{2})=b$):

\begin{center}
	\begin{tabular}{ l | c  }
		\hline
		Type & Number of $d$-maximal $2$-groups \\ \hline
		$(2,1)$ & $1$ \\ \hline
		$(3,1)$ & $2$ \\ \hline
		$(3,2)$ & $1$ \\ \hline
		$(4,1)$ & $4$ \\ \hline
		$(4,2)$ & $19$\\ \hline
		$(4,3)$ & $25$ \\ \hline
		$(5,1)$ & $5$ \\ \hline
		$(5,2)$ & $98$ \\ \hline
		$(5,3)$ & $6808$ \\ \hline
		$(6,1)$ & $7$ \\ \hline
		$(6,2)$ & $362$ \\ \hline
		$(7,1)$ & $8$ \\ \hline
	\end{tabular}
\end{center}
\medskip
Note that in the   cases where $d(G)\leq 4$, the above results have been established  much earlier by Kahn and Patentreger, but unfortunately their manuscript \cite{Kahn Unpublsh} remains unpublished.\footnote{According to Kahn, a small mistake in their calculation  has been corrected by P. M. Minh and P. D. Tai.}

\medskip
As far as we know, all the known $d$-maximal $2$-groups are of class $\leq 3$. 

\begin{pbm}
	Are there $d$-maximal $2$-groups of arbitrary large nilpotency class?
\end{pbm}

It seems that we always  need a larger number of generators  to produce $d$-maximal $2$-groups of higher classes.
\begin{conjecture}
For every integer $r\geq 1$, the (nilpotency) class of an $r$-generated $d$-maximal $2$-group  is bounded in terms of $r$.
\end{conjecture}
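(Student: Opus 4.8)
The plan is to recast the conjecture as an \emph{exponent} bound for the derived subgroup, and then to attack that bound with the rank‑counting device used in the proofs of Theorems \ref{Main 1} and \ref{Kahn's modified 1}.

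\emph{Reduction.} Let $G$ be a $d$-maximal $2$-group generated by $r$ elements $x_1,\dots,x_r$, of nilpotency class $c$. By Corollary \ref{Cor main 2}, $\gamma_n(G)=G^{2^{n-1}}$ and $\gamma_{n+1}(G)=\gamma_n(G)^{2}$ for $n\geq2$, so $\gamma_n(G)=\gamma_2(G)^{2^{n-2}}$ and $\gamma_2(G)$ is powerful; since in a powerful $2$-group $\gamma_2(G)^{2^{k}}=\{t^{2^{k}}\mid t\in\gamma_2(G)\}$ (see \cite{DDMS} or \cite{MannLubotzky}), one has $\gamma_{c+1}(G)=1$ if and only if $\exp\gamma_2(G)\leq2^{c-1}$. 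Thus
$$c=1+\log_2\exp\gamma_2(G),$$
and the conjecture is equivalent to: \emph{for fixed $r$ the exponent of $\gamma_2(G)$ is bounded, $G$ ranging over the $d$-maximal $2$-groups with $d(G)=r$}. Moreover $\Phi(\gamma_2(G))=\gamma_2(G)^{2}=\gamma_3(G)$, and the classes of the $\binom{r}{2}$ commutators $[x_i,x_j]$ span $\gamma_2(G)/\gamma_3(G)=\gamma_2(G)/\Phi(\gamma_2(G))$; hence $d(\gamma_2(G))\leq\binom{r}{2}$ and the $[x_i,x_j]$ generate $\gamma_2(G)$. Applying the surjectivity of powers in the powerful group $\gamma_2(G)$ to this generating set gives $\exp\gamma_2(G)=\max_{i<j}\mathrm{ord}([x_i,x_j])$, so it is enough to bound, in terms of $r$, the order of a single commutator of generators.

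\emph{Attack.} The natural way to produce a contradiction — in the spirit of Theorem \ref{Main 1} — is to exhibit, when $\exp\gamma_2(G)=2^{c-1}$ with $c$ large, a \emph{proper} characteristic (hence admissible) subgroup of $G$ of rank $\geq r=d(G)$. I would examine the descending chain of centralizers $C_i=C_G(\gamma_i(G))$ (or of $C_G(\Omega_1(\gamma_i(G)))$) for $2\leq i\leq c$: it ends at a subgroup containing $\gamma_c(G)\neq1$, while $C_2<G$ because $c\geq3$. Using the powerful embedding $[\gamma_i(G),G]=\gamma_{i+1}(G)=\gamma_i(G)^{2}$ one can, exactly as in Theorem \ref{Main 1}, bound $|G:C_i|$ by the order of a suitable group of homomorphisms into $\gamma_{i+1}(G)/\gamma_{i+2}(G)$ and show $\Phi(C_i)$ to be correspondingly small; the hope is that, for $c$ large relative to $r$, accumulating these estimates along the chain forces $d(C_i)\geq r$ for some $i$ with $C_i<G$, against $d$-maximality. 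A second mechanism for the same contradiction is Proposition \ref{Key for generator's number} with $K=\gamma_2(G)$: as $\gamma_2(G)$ is powerfully embedded, $[\gamma_2(G),H]\leq\gamma_2(G)^{2}$ for every $H\leq G$, so $d(H\gamma_2(G))\geq d(H)+d(\gamma_2(G))-d(H\cap\gamma_2(G))$, and one would try to choose $H$ — arising from large powers of a commutator $[x_i,x_j]$ — so that $(H\cap\gamma_2(G))^{2}\leq H^{2}\cap\gamma_2(G)^{2}$ is strict enough to give $d(H\gamma_2(G))\geq r$ while $H\gamma_2(G)<G$.

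\emph{The main obstacle.} A powerful $2$-group of bounded rank can have arbitrarily large exponent — already $C_{2^{n}}$, of rank $1$ — so nothing internal to $\gamma_2(G)$ can bound $c$; the $d$-maximality of the \emph{ambient} group $G$ must be used essentially and non-locally. But each device above consumes only a bounded number of generators, and I do not see how to guarantee that a long chain $\gamma_2(G)\geq\gamma_2(G)^{2}\geq\cdots$ necessarily forces a proper admissible subgroup of the full rank $r$; making that accounting tight is, I expect, precisely where new ideas are needed, and is presumably why the statement has stayed a conjecture. (A soft, non-explicit bound via a compactness argument over the free pro-$2$ group on $r$ generators would already presuppose the uniform finiteness one wants to prove.) A sensible first target is the case $d(\gamma_2(G))=1$, i.e.\ $\gamma_2(G)$ cyclic, where the conjugation action of $G$ on $\gamma_2(G)$ factors through the small group $\mathrm{Aut}(C_{2^{n}})$ and the leverage from $d$-maximality is greatest, then $d(\gamma_2(G))=2$, before the general rank — recall that the cases $r\leq3$ already give class $\leq2$ by Proposition \ref{dMax 2Grp on 3-gen have class 2} and its analogues.
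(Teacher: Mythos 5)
The statement you were asked about is one of the open conjectures in the paper's concluding section; the paper offers no proof of it, and your text, as you yourself acknowledge, does not contain one either. What you do establish is a correct reformulation: using Corollary \ref{Cor main 2} and Lemma \ref{Basic facts}(4), $\gamma_{n+1}(G)=\gamma_n(G)^{2}$ for $n\geq 2$, $\gamma_2(G)$ is powerful, and power subgroups of a powerful $2$-group consist of actual powers, so the class of a non-abelian $d$-maximal $2$-group is $1+\log_2\exp\gamma_2(G)$, and $\exp\gamma_2(G)$ is the largest order of a commutator $[x_i,x_j]$ of generators, with $d(\gamma_2(G))\leq\binom{r}{2}$. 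This is sound, and it is essentially equivalent to the remark the paper itself makes right after the conjecture (that the conjecture amounts to an $r$-bounded order for $r$-generated $d$-maximal $2$-groups). But a reformulation is not progress toward the bound itself.

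The genuine gap is the entire ``Attack'' step, and the specific mechanisms you sketch would not work as stated. The rank-counting trick of Theorems \ref{Main 1} and \ref{Kahn's modified 1} works only after reducing to the situation $|\gamma_3(G)|=p$, $\gamma_2(G)^2=1$, where one shows $\Phi(C_G(\gamma_2(G)))\leq\gamma_2(G)\cap Z(G)$ and gets $d(C)\geq d(G)$; that argument controls a single layer and cannot rule out class $3$ for $p=2$ --- indeed the paper exhibits $d$-maximal $2$-groups of class $3$ of order $2^8$, so any bound must depend quantitatively on $r$, which your centralizer-chain sketch never does. Likewise, Proposition \ref{Key for generator's number} with $K=\gamma_2(G)$ gives nothing if $H$ is built from powers of a commutator, since then $H\leq\gamma_2(G)$, $H\gamma_2(G)=\gamma_2(G)$, and $d(\gamma_2(G))<d(G)$ already holds without contradiction; you would need an $H$ not contained in $\gamma_2(G)$ with a controlled intersection, and no recipe is given. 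You are candid that the accounting along the chain $\gamma_2(G)\geq\gamma_2(G)^2\geq\cdots$ is missing; that is exactly the content of the conjecture, so the proposal should be read as a plausible research plan (with the cases $d(\gamma_2(G))=1,2$ as reasonable first targets, consistent with Proposition \ref{dMax 2Grp on 3-gen have class 2} for $r\leq 3$), not as a proof.
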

By virtue of Lemma \ref{Basic facts}(4), the above amounts to saying that the
$r$-generated $d$-maximal $2$-groups have an $r$-bounded order.
\medskip

Call a finitely generated pro-$p$ group $G$ \textit{$d$-maximal}, if   $d(H)<d(G)$ for every  closed subgroup $H<G$ (recall that $d(H)$  stands for $\dim_{\mathbb{F}_p}H/H^{p}[H,H]$).  The following is a weaker version of the previous conjecture.
\begin{conjecture}
Every finitely generated $d$-maximal pro-$2$ group is finite.
\end{conjecture}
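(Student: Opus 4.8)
The plan is to reduce the conjecture to its finite counterpart (the preceding conjecture) and, failing that, to attack it directly via $2$-adic analyticity. For the reduction, let $G$ be a finitely generated $d$-maximal pro-$2$ group with $d(G)=r$, and let $P_k(G)$ be the lower $2$-central series, which is a base of open normal subgroups. First I would verify that each finite quotient $G_k:=G/P_k(G)$ is an $r$-generated $d$-maximal $2$-group: for $k\geq 2$ we have $P_k(G)\leq\Phi(G)$, so $d(G_k)=d(G)=r$; and every subgroup of $G_k$ is the image of an open subgroup $H\leq G$ with $P_k(G)\leq H$, so if $H<G$ then $d(H/P_k(G))\leq d(H)<r$. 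Since $G=\varprojlim_k G_k$, the preceding conjecture (which bounds $\vert G_k\vert$ by a function of $r$) forces the tower $(G_k)$ to stabilize, whence $G$ is finite. Thus the pro-$2$ statement is a formal consequence of the finite one; its interest is that it might be provable without establishing the full finite bound, for instance along the lines below.

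A direct approach would start from the observation that, since every proper closed subgroup $H<G$ satisfies $d(H)<d(G)=r$, the group $G$ has finite rank (equal to $r$), hence is $2$-adic analytic by the classical characterization of $p$-adic analytic pro-$p$ groups as those of finite rank (see \cite{DDMS}). Passing Corollary \ref{Cor main 2} and Lemma \ref{Basic facts}(3) through the inverse limit, one gets that $N:=\overline{\gamma_2(G)}$ is powerfully embedded in $G$ (in particular powerful) and that $\vert G:N\vert=2^{r}$. If $N$ had finite exponent it would be finite --- a finitely generated powerful pro-$2$ group is $2$-adic analytic, and a $2$-adic analytic group of finite exponent is finite --- and then $G$ would be finite; so I may assume $G$ infinite. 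In that case the $2$-power map on $N$ stabilizes to a nonzero rank, so some term $U:=N^{2^{t}}$ is uniform of dimension $m=\dim G\geq 1$, with $d(U^{2^{s}})=m$ for all $s\geq 0$.

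The hard part will be to derive a contradiction from the existence of such a $U$ inside a $d$-maximal $G$. A uniform group of positive dimension is never $d$-maximal on its own, since $U^{2}<U$ has $d(U^{2})=d(U)$; but $U$ sits in $G$ only as a proper closed (indeed open) subgroup, so $d$-maximality of $G$ yields merely $d(U)=m\leq r-1$, and similarly the infinite descending chain $U>U^{2}>U^{4}>\cdots$ consists of proper closed subgroups all of the same rank --- neither of which is in itself a contradiction. Turning this into one --- equivalently, proving that no infinite $2$-adic analytic pro-$2$ group is $d$-maximal --- appears to require a much finer control of how $d(H)$ varies over the open subgroups $H$ of a $2$-adic analytic group than Corollary \ref{Cor main 2} provides; this I expect to be the genuine obstacle, and the conjecture is indeed still open.
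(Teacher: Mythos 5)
You have not given a proof, and none should be expected: this statement is one of the open problems at the end of the paper, and the paper itself offers no proof --- its only remark is that this conjecture is ``a weaker version of the previous conjecture.'' Your proposal is consistent with that: the reduction you sketch is precisely the content behind the paper's remark, and it is correct. Indeed, for a finitely generated pro-$2$ group the terms $P_k(G)$ of the lower $2$-central series are open and form a base of neighbourhoods of the identity, so $\bigcap_k P_k(G)=1$; each quotient $G_k=G/P_k(G)$ is an $r$-generated finite $2$-group ($P_k(G)\leq\Phi(G)$ for $k\geq 2$ gives $d(G_k)=r$), and any proper subgroup of $G_k$ pulls back to a proper open subgroup $H<G$, whence $d$-maximality of $G$ gives $d(H)<r$ and so $d(H/P_k(G))\leq d(H)<r$. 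Thus each $G_k$ is $d$-maximal of rank $r$, and a bound on the order of $r$-generated $d$-maximal $2$-groups (the equivalent form of the preceding conjecture noted in the paper via Lemma \ref{Basic facts}(4)) would force the quotients to stabilize and $G$ to be finite. Your second, direct line of attack --- finite rank, hence $2$-adic analyticity, an open powerful subgroup $\overline{\gamma_2(G)}$ of index $2^{r}$, and a uniform open subgroup $U$ with the chain $U>U^{2}>U^{4}>\cdots$ of constant rank --- is a sensible reformulation (the conjecture amounts to saying no infinite $2$-adic analytic pro-$2$ group is $d$-maximal), and you correctly identify that $d$-maximality alone gives no contradiction at this point; that is exactly where the problem remains open, so your honest stopping point is the right one.
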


Let us mention finally a problem posed by A. Mann \cite[Question 22]{MannQuestions}.
\begin{pbm}
What is the structure of $d$-maximal $2$-groups? Do they have bounded derived length?
\end{pbm}


\section*{Acknowledgements}
We are grateful to Professor Bruno Kahn for several invaluable comments, and for providing us with some of his earlier manuscripts on the subject.   The first author would like  to thank Professor B. Eick for her hospitality during his visit to Universit\"{a}t Braunschweig, and for the invaluable things he learned from her.


\begin{thebibliography}{00}
		\bibitem{Berk1} Y. Berkovich and Z. Janko, Groups of prime power order, vol.
	1, Walter de Gruyter, 2008.
	\bibitem{Berk3} Y. Berkovich and Z. Janko, Groups of prime power order, vol.
	3, Walter de Gruyter, 2011.
	\bibitem{Black} 
	N. Blackburn, Generalizations of certain elementary theorems on $p$-groups, \emph{Proc. London Math. Soc.} \textbf{11}  (1961),  1--22.
	
	\bibitem{DDMS} J. Dixon, M. du Sautoy, A. Mann, D. Segal, Analytic pro-$p$ Groups, second ed., Cambridge Univ. Press, 1999.
	\bibitem{Janko} 
	Z. Janko, Finite $2$-groups with exactly four cyclic subgroups of order $2$, \emph{J. reine angew. Math.} \textbf{566} (2004), 135--181.	
	\bibitem{Kahn SW}	
	B. Kahn, The total Stiefel-Whitney class of a regular representation,  \emph{J. Algebra} \textbf{144} (1991), 214--247.	
	
	\bibitem{Kahn APE}	
	B. Kahn, A characterization of powerfully embedded normal subgroups of a $p$-group, \emph{J. Algebra} \textbf{188}
	(1997), 401--408.
	\bibitem{Kahn Unpublsh}	
	B. Kahn, Sur les $p$-groupes d-maximaux, unpublished manuscript (1992) (communicated by Bruno Kahn).
	\bibitem{GAP}
	The GAP Group, GAP - Groups, Algorithms, and Programming, Version 4.4.12, www.gap-system.org, 2008.
	
	\bibitem{Laffey}
	T. J. Laffey, The minimal number of generators of a finite $p$-group, \emph{Bull. London Math.
		Soc. } \textbf{5} (1973), 288--290
	\bibitem{MannLubotzky}
	A. Lubotzky and A. Mann,  Powerful p-groups. I. finite groups,
	\emph{J. Algebra} \textbf{105} (1987) 484--505.
	\bibitem{MannQuestions} A. Mann, Some questions about $p$-groups, \emph{J. Aust.
		Math. Soc.}, \textbf{67} (3) (1999), 356--379.
	
	\bibitem{Minh}
	P.A. Minh,
	$d$-Maximal $p$-groups and Stiefel-Whitney classes of a regular representation,
	\emph{J. Algebra} \textbf{179},  (1996),  483--500
	\bibitem{Minh1997}
	P.A. Minh, On a conjecture of Kahn for
	the Stiefel-Whitney classes of the regular representation,
	\emph{J. Algebra} \textbf{188},  (1997),  590--609
\end{thebibliography}
\end{document}